\newtheorem{Theorem}{Theorem}[section]
\newtheorem{Corollary}[Theorem]{Corollary}
\newtheorem{Lemma}[Theorem]{Lemma}
\newtheorem{Proposition}[Theorem]{Proposition}
\theoremstyle{definition}
\newtheorem{Definition}[Theorem]{Definition}
\newtheorem{Notation}[Theorem]{Notation}
\newtheorem{Remark}[Theorem]{Remark}
\def\F{{\mathbb F}}
\def\A{{\cal A}}
\def\L{{\cal L}}
\def\B{{\cal B}}
\newcommand{\cccell}[3]{\colorbox{#1}{\hbox to #2{\hfil #3\hfil}}}
\newcommand{\lccell}[3]{\colorbox{#1}{\hbox to #2{#3\hfil}}}
\newcommand{\rccell}[3]{\colorbox{#1}{\hbox to #2{\hfil #3}}}
\newcommand{\s}{\mathcal{S}}
\journal{Journal of Pure and Applied Algebra}
\begin{document}
\begin{frontmatter}

\title{Algebras of length one}

\author[MSU,MCFAM,MIPT]{O.V.~Markova\corref{cor1}}
\ead{ov\_markova@mail.ru}
\author[DMUO]{C.~Mart\'inez}
\ead{cmartinez@uniovi.es}
\author[DMUFC,DMUSP]{R.L.~Rodrigues}
\ead{rodrigo@mat.ufc.br}

\cortext[cor1]{Corresponding author}

\address[MSU]{Department of Mechanics and Mathematics, Lomonosov Moscow State University, Moscow, 119991, Russia.}
\address[MCFAM]{ Moscow Center
for Fundamental and Applied Mathematics, Moscow, 119991, Russia.}
\address[MIPT]{Moscow Institute of Physics and Technology, Dolgoprudny, 141701, Russia.}
\address[DMUO]{Department of Mathematics, University of Oviedo, Calvo Sotelo, s/n, 33007 Oviedo, Spain.}
\address[DMUFC]{Department of Mathematics, Federal University of Cear\'a, Pici Campus, Block 914, 60455-760, Fortaleza, Brazil.}
\address[DMUSP]{Department of Mathematics, University of S\~ao Paulo, Butant\~a, 05508-090, S\~ao Paulo, Brazil.}

\begin{abstract}

{Let $\A$ be an algebra over a field $\F$ and let $\s$ be a generating set of $\A$. The length of $\s$ indicates the maximal length needed to express an arbitrary element of $\A$ as a linear combination of words in the elements of $\s$. The length of an algebra $\A$ is defined as the maximum of lengths of its generating sets. In this paper (not necessarily associative) algebras, over fields of arbitrary characteristic, having length equal to 1  are determined.}
\end{abstract}

\begin{keyword} length, non-associative algebra, dimension, basis, generating set, characteristic.

\MSC[2020]  17A60, 17A01, 15A03, 17A05.
\end{keyword}
\end{frontmatter}

\section*{Introduction}

In the present paper, we will denote by $\A$ a unital (with identity element $1_{\A}$) not necessarily associative algebra over a field $\F$.
Information about non-associative algebras can be found, for instance, in  \cite{Osb72, Schaf, ZheSSS}.

Any product of a finite number of elements of a finite subset $\s \subset \A$ is a word in the letters from $\s$. The length of a word is equal to the number of letters different of $1_{\A}$ in the corresponding product. By convention, $1_{\A}$ is a word  in any subset $ {\s}$
of length 0.

If $\s$ is a generating system of the algebra $\A$, that is, $\A$ is the smallest subalgebra of $\A$ containing $\s$, then any element of $\A$ can be expressed as a linear combination of words in elements of $\s$.
If we can express all elements of $\A$ using words of length at most $k$, but we can not use only words of length at most $k-1$, we say that the length of the generating system is $k$. The length of a finitely generated algebra $\A$, $l(\A)$, is the maximal length of its finite generating systems.

Notice that the unital algebra $\A$ has length equal to 0 if and only if $\A = \F 1_{\A}$. Otherwise length is a positive integer or infinite.

If $\A$ is not unital, we can consider the unital hull of $\A$, $\tilde{\A} = \F 1_{\A} + \A$, and clearly $l(\A) = l(\tilde{\A})$.

A finite-dimensional associative algebra has always finite length and a finitely generated associative algebra with finite length has finite dimension.  Algebras considered in this paper will be always finite-dimensional.

Guterman and Kudryavtsev in \cite{GutK17, GutK20} started the study of length of non-associa\-tive algebras. In particular, they showed that any finite-dimensional non-associative algebra $\A$ has finite length, and if  $\dim \A = n > 2$,  then the length is bounded by $l(\A) \leq 2^{\dim \A-2}$.

The length of an algebra is an important invariant for the study of finite-dimensional algebras.   In some sense it measures the multiplicative complexity of the algebra.  The knowledge of a universal upper bound for the length gives the maximal size of products in generators that we need to consider in order to verify some property.
First of all, the length of a subset in a given algebra  determines the complexity of the procedures  verifying whether this  set generates the algebra and computing a basis of the algebra from the generating set. For matrix algebras, properties under consideration include simultaneous reduction (by similarity) of a set of matrices to a canonical, for example, (block-)triangular form \cite{Alp2}, or unitary similarity \cite{Alp1}. That is, the length function has applications in computational methods of matrix theory.

Therefore, the study of algebras whose length is close to the minimal value is of interest. Clearly the minimal non-trivial value of the length is $1$. Matrix algebras of length $1$ were described in \cite{Mar12}. The complete classification of associative algebras of length $1$  is also known (see \cite{Mar13}).

A similar question for non-associative algebras had not been addressed until now, may be because the problem, in a totally general context, seems somehow wild.
It was unclear what kind of tools should be used. In this paper we  address this question and give a complete description of algebras of length $1$ over an arbitrary field $\F$, mainly using methods of linear algebra and linearization techniques.

Our main result gives a characterization of algebras of length 1 in terms of a basis whose multiplication table satisfies some precise conditions.

\section{Preliminaries}

In order to place the problem in the proper context and to make it easier to readers we will start with some general definitions and results.

\begin{Notation}   Given $\s \subseteq \A$, $\F \s$ (or $\F(a_1, \ldots, a_n)$, if $\s = \left\{a_1, \ldots, a_n\right\}$) denotes the vector space linearly spanned by the elements of $\s$ and ${\rm alg} (\s)$ (or ${\rm alg}(a_1, \ldots, a_n)$, if $\s = \left\{a_1, \ldots, a_n\right\}$) will represent the subalgebra of $\A$ generated by $\s$, that is, ${\rm alg} (\s) =  \cap \ \B$, where $\B$ runs over all subalgebras of $\A$  with $\s \subseteq \B$.

The set of all words in $\mathcal{S}$ with length
 not greater than $i$ is denoted by $\s^{i}$, for $i \geq 0$ and the set $\L_{i}(\s)$ stands for $\F \s^{i} $. Clearly  $\displaystyle \bigcup_{i \geq 0} \mathcal{L}_{i} (\s) = {\rm alg} (\s)$.
\end{Notation}

\begin{Remark}
A subset $\s \subset \A$ is a generating set of $\A$ if and only if $\A  = {\rm alg} (\s)$.
\end{Remark}

With these definitions and notations in mind we may now introduce formally the concept of length, both for  $\s$ and for $\A$,  as follows.

\begin{Definition}
The length of a  finite subset $\s$ of $\A$ is
$$l(\s) = {\rm min}\left\{k \in \mathbb{Z}_{+} : {\rm alg} (\s) = \L_{k}(\s) \right\}.$$
\end{Definition}

\begin{Definition}
The length of a finitely generated algebra $\A$ is
$$l(\A) = {\rm max}\left\{l(\s) :  \A = {\rm alg} (\s) \hspace{0.5cm}  \s \ {\rm finite}   \right\}.$$
\end{Definition}

Given a set  $\s$  its length evaluation is a standard linear algebraic problem: the construction of a  basis of  ${\rm alg} (\s)$. But  its computational complexity comes from the fact that the number of  words in each  $\s^i$ grows exponentially. We would like to emphasize that  the different behaviour  in associative and non-associative cases is linked to the different  properties of the sequence $\{\dim \L_i(\s)\},\ i=0,1,\ldots$. In the  associative case  this sequence is strictly increasing, that is,   $\dim \L_i(\s)< \dim \L_{i+1}(\s)$, until it stabilizes at  step  $l(\s)$, that is, $\L_{l(\s)}(\s)={\rm alg} (\s)$ and $\dim \L_n(\s)=\dim {\rm alg} (\s)$ for every $n\geq l(\s)$. In the   non-associative case the sequence   $\{\dim \L_i(\s)\},\ i=0,1,\ldots$ is non-decreasing, that is,   $\dim \L_i(\s)\leq \dim \L_{i+1}(\s)$ until it stabilizes. But  in the non-associative case the equality $ \L_i(\s)=\L_{i+1}(\s)$ does not imply that  $\L_i(\s)={\rm alg} (\s)$.  However, for a given $n\geq 1$, it is sufficient to have   \[\dim \L_{n}(\s) = \dim \L_{n+1}(\s) = \ldots = \dim \L_{2n}(\s)\]  to guarantee that the identity $\dim \L_{n}(\s) = \dim \L_{n+t}(\s)=\dim {\rm alg} (\s)$ holds for any $t \geq 1$,  as it was established in \cite[Proposition 2.3]{GutK20}. Consequently, the equality $ \L_1(\s)=\L_{2}(\s)$  implies  that  $\L_{1}(\s) = \L_{t}(\s)={\rm alg} (\s)$  for all $t \geq 2$.

Applications of the length function appear, for example, in computational methods of matrix theory, since the length determines the complexity of some rational procedures. Al'pin and Ikramov extended the classical Specht criterion for  unitary similarity between two complex $n \times n$ matrices to  unitary similarity between two normal matrix sets of cardinality $m$. In the same way, the well-known result of Pearcy extending Specht's theorem can be used as a finiteness criterion. The complexity of this criterion depends both, on $n$ and on the length of  algebras under their analysis \cite{Alp1, Alp2}.

The problem of computing  lengths of  the matrix algebra $M_n(\F)$, as a function of the matrix size $n$,  was first addressed by Paz~\cite{Paz84} in 1984, and still remains open. The known bounds are non-linear functions of $n$, see \cite{Paz84, Pap97, Shi19}.  Linear bounds for specific generating sets has been found in \cite{GLMSh18, GutMM09, GutMM19}. See also references therein.

In the case of arbitrary associative algebras Pappacena \cite{Pap97} provided an upper bound for the length of any finite-dimensional associative algebra $\A$ as a function of two invariants of $\A$: its dimension and $m(\A)$, the maximal degree of minimal polynomials of elements in the algebra. In \cite{Mar09SM} a stronger bound, depending of the same two parameters, was obtained for commutative algebras. A study of algebraic properties of the length function, namely the length of the direct sum, can be found in \cite{GutKM19}.

Let us remember the classification of associative algebras of length 1.

\begin{Theorem}[{\cite[Theorem 3]{Mar13}}]\label{markova}
Let $\F$ be an arbitrary field and let $\A$ be a finite-dimensional associative unital $\F$-algebra. Then $l(\A) = 1$ if and only if it is one of the algebras in the following list:
\begin{itemize}
\item[$(1)$] $\A \cong \F \oplus \F$;
\item[$(2)$] $\A$ is a field, $\dim \A = 2$;
\item[$(3)$] $\F = \F_{2}$, $\A \cong \F \oplus \F \oplus \F$;
\item[$(4)$] {\rm dim} $\A/J(\A) = 1$, $J(\A) \neq 0$, $J(\A)^{2} = 0$, where $J(\A)$ denotes  the Jacobson radical of $\A$;
\item[$(5)$] There exist elements $e, f \in \A$ such that
$e^{2} = e, f^{2} = f, ef = fe = 0, e + f = 1; {\rm dim} \ e \A e = {\rm dim} \ f \A f = 1; f \A e = 0,  e \A f \neq 0.$
\end{itemize}
Algebras of different types are not isomorphic. Two algebras $\B, \mathcal{C}$ of type $(4)$  (respectively of type $(5)$) are isomorphic if and only if ${\rm dim} \ \B = {\rm dim} \ \mathcal{C}$.
\end{Theorem}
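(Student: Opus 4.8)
The plan is to convert the length hypothesis into a linear-algebraic closure condition and then run a structure-theoretic case analysis. First I would record the working criterion: by the quoted result \cite[Proposition 2.3]{GutK20} (the case $n=1$), a finite set $\s$ satisfies $l(\s)\le 1$ if and only if $\L_1(\s)=\L_2(\s)$, and since in the associative setting $\L_2(\s)$ is spanned by $1_\A$, the generators, and their pairwise products, this says precisely that every product $a_ia_j$ of generators lies in $\F 1_\A+\F\s$. Hence $l(\A)=1$ is equivalent to $\dim\A\ge 2$ together with the demand that \emph{for every} finite generating set $\s$ one has $\L_1(\s)=\A$. The proof then splits into the forward implication (extracting the structure) and the verification that each listed algebra meets this demand.

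The heart of the forward direction is the single-element bound: \emph{every $a\in\A$ satisfies $\dim_\F\F[a]\le 2$}. I would prove this by a substitution trick rather than by any (false) monotonicity of length under subalgebras. Assume $1_\A,a,a^2$ are independent and fix a complement $U$ with $\A=\F[a]\oplus U$. Starting from any generating set containing $a$, replace each other generator by its $U$-component; since the discarded $\F[a]$-components already lie in $\mathrm{alg}(a)$, the new set $\s=\{a\}\cup\{u_i\}$, with all $u_i\in U$, still generates $\A$. But then $\L_1(\s)\subseteq(\F 1_\A+\F a)\oplus U$ has dimension at most $2+\dim U<\dim\A$, contradicting $\L_1(\s)=\A$. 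From $\dim\F[a]\le 2$ it follows that every nilpotent $n$ has $n^2=0$ (write $n^2=\alpha 1_\A+\beta n$ and use nilpotency), whence each element of $J=J(\A)$ squares to zero and $xy=-yx$ on $J$; the same complement-substitution technique, applied to a pair $x,y\in J$ with $xy\neq 0$, upgrades this to $J^2=0$. A cross-product argument (the obstruction $ij=k\notin\F 1_\A+\F i+\F j$ familiar from the quaternions) rules out noncommutative division factors and matrix blocks, so $\bar\A:=\A/J$ is a finite product of field extensions, each of degree $\le 2$.

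Next I would classify the admissible semisimple quotients and glue in the radical. A single factor is either a quadratic field (type $(2)$) or $\F$ itself, the latter occurring only with $J\neq 0$; two factors force $\F\oplus\F$ (type $(1)$), because any factor of degree $2$ alongside another factor produces an element of degree $\ge 3$; three factors survive only when the field is so small that three distinct scalars are unavailable, i.e. $\F=\F_{2}$, giving $\F_{2}^{3}$ (type $(3)$) and no larger semisimple example. When $J\neq 0$ I lift an orthogonal idempotent decomposition of $\bar\A$ (possible as $J$ is nilpotent) and pass to the Peirce decomposition. If $\bar\A=\F$ the algebra is local with $\A=\F 1_\A\oplus J$, $J^2=0$, $J\neq 0$: type $(4)$. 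If $\bar\A=\F\oplus\F$ with idempotents $e,f$, the corners $e\A e,f\A f$ are one-dimensional and $J=e\A f\oplus f\A e$; the crucial point is \emph{one-sidedness}: if both $e\A f$ and $f\A e$ were nonzero, choosing $0\neq u\in e\A f$ and $0\neq v\in f\A e$ the set $\{e+u,f+v\}$ generates (since $(e+u)(f+v)=u$ and $(f+v)(e+u)=v$) yet $u\notin\L_1$, forcing length $\ge 2$; hence one corner vanishes and we reach type $(5)$. Finally, over a quadratic-field base or over $\F_{2}^{3}$ a nonzero radical is impossible: a degree-$2$ element splits off a subfield $K'\cong\bar\A$, over which $J$ is a free module, and for a basis element $j$ the product $aj$ is independent of $\F 1_\A+\F a+\F j$, again yielding length $\ge 2$.

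For the converse I would verify $\L_1(\s)=\L_2(\s)$ for \emph{every} generating set of each listed algebra: the degree-$\le 2$ property controls all squares $a_i^2$, while the explicit multiplication tables control the mixed products (e.g. $exf\cdot eyf=0$ in type $(5)$ because $fe=0$), and $\dim\A\ge 2$ gives $l(\A)\ge 1$; the non-isomorphism statements then follow by comparing $\dim\A/J$, commutativity, and $\dim\A$. The main obstacle throughout is the forward direction: because length is not monotone under subalgebras or quotients, each structural constraint ($\dim\F[a]\le 2$, $J^2=0$, one-sidedness, and the vanishing of the radical over the larger bases) must be extracted by \emph{engineering} a specific generating set whose pairwise product escapes $\F 1_\A+\F\s$. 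Making these constructions uniform over an arbitrary field is where the real work lies, and it is also what produces the genuinely exceptional entry $(3)$, which exists only because $\F_{2}$ has too few scalars to support three distinct simple components or an element of degree $\ge 3$.
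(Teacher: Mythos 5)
The paper offers no proof of Theorem \ref{markova}: it is quoted verbatim from \cite[Theorem 3]{Mar13} as background, so there is no in-paper argument to compare yours against. Judged on its own terms, your reconstruction runs on the right engine. The closure condition ``$ab\in\F(1_{\A},a,b)$ for all $a,b\in\A$'', extracted by the complement-substitution trick applied to a generating set built from a basis, is precisely what the present paper proves (in the more general non-associative setting) as Proposition \ref{proposition0}, and you are right that this, rather than any monotonicity of length under subalgebras or quotients, is what every structural constraint must be squeezed out of. Your derivation of $\dim\F[a]\le 2$, of $J(\A)^{2}=0$, of the exclusion of matrix blocks and noncommutative division factors, and of the one-sidedness of the radical in the two-idempotent case all reduce correctly to this criterion, as does the converse verification of the five types.

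Two steps need repair. First, in the two-idempotent case the set $\{e+u,f+v\}$ generates $\A$ only when $\dim e\A f=\dim f\A e=1$; in general ${\rm alg}(e+u,f+v)=\F e+\F f+\F u+\F v$ is a proper subalgebra, so the step fails as written. The conclusion survives: either augment the set by bases of complements of $\F u$ in $e\A f$ and of $\F v$ in $f\A e$, or simply apply the closure criterion to the pair $a=e+u$, $b=f+v$ directly --- from $ab=u$ and $u=\alpha(e+f)+\beta(e+u)+\gamma(f+v)$ one gets $\beta=1$, $\gamma=0$, $\alpha+\beta=0$ and $\alpha+\gamma=0$, a contradiction. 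Second, your explanation of entry $(3)$ --- that three distinct scalars are unavailable over $\F_{2}$ --- rules out three simple factors over larger fields but says nothing against $\F_{2}^{\oplus n}$ for $n\ge 4$, where every element still has degree at most $2$ (it satisfies $x^{2}=x$); you must kill this case by the same engineering, e.g.\ $a=(1,1,0,0)$ and $b=(1,0,1,0)$ give $ab=(1,0,0,0)\notin\F_{2}(1_{\A},a,b)$. For completeness the same device is also needed to show that the Peirce corners $e\A e$, $f\A f$ are one-dimensional and that $\F_{2}^{\oplus 3}$ admits no nonzero radical, both of which you assert. These are omissions of cases rather than of ideas; once filled in, the argument is sound.
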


From now on we will not assume associativity of the considered algebras.  Our aim is to determine algebras of length 1, without any additional assumption.

\section{General results}

In this section we give some general results about algebras of length 1. The main result of the section gives a characterization of them that is essential for the rest of results in this paper. Some consequences of this result are given explicitly. In particular, algebras of length 1 are proved to be power-associative.

To start, let us  remark that $l(\A) = 1$ means that if $\s$ is  an arbitrary generating  set of  ${\A}$  and $\bar{\s} = \s \cup \{1_{\A}\}$, then  $\bar{\s}$  linearly spans $\A$, that is, $\A = \rm{alg}(\s)$ implies $\A = \F \bar{\s}$. If $\A$ is a finitely generated algebra over a field $\F$ and $l(\A) = 1$, then $\A$ is finite-dimensional.

Next proposition will play the main role to get our description of algebras of length 1 in terms of the existence of a basis with a known multiplication table.

\begin{Proposition}\label{proposition0}
Let $\A$ be an $\F$-algebra of dimension $n$ with identity element $1_{\A}$. $\A$ has length $1$ if and only if $ab \in \F(1_{\A}, a, b)$, for all $a, b \in \A$.
\end{Proposition}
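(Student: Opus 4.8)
The plan is to prove both directions by working directly with the definition $l(\A)=1$, which by the remark just before the proposition is equivalent to the statement that for every generating set $\s$ of $\A$ we have $\A=\F\bar\s$, where $\bar\s=\s\cup\{1_\A\}$.

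For the forward direction, suppose $l(\A)=1$ and take arbitrary $a,b\in\A$. The natural move is to reduce to a two-generated situation by restricting attention to the subalgebra $\B={\rm alg}(a,b)$. If I can show that $l(\B)=1$ as well, then since $\{a,b\}$ generates $\B$, the length-$1$ condition forces $\B=\F(1_\A,a,b)$, and in particular $ab\in\B=\F(1_\A,a,b)$, which is exactly what I want. The subtlety is that length is defined as a maximum over generating sets, so $l(\A)=1$ does not transparently pass to subalgebras. I would instead argue more concretely: the condition $l(\A)=1$ means $\L_1(\s)=\L_2(\s)$ for every generating $\s$, and by the stabilization fact quoted from \cite[Proposition 2.3]{GutK20} (the equality $\L_1(\s)=\L_2(\s)$ implies $\L_1(\s)={\rm alg}(\s)$), I only need to verify the containment $ab\in\F(1_\A,a,b)$ for each fixed pair. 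The cleanest route is to complete $\{a,b\}$ to a generating set $\s$ of all of $\A$; then $\A=\F\bar\s$ spans $\A$ by words of length $\le 1$, so every product of two generators, in particular $ab$, lies in $\F\bar\s$. This shows every degree-two product of \emph{generators} is a linear combination of $1_\A$ and single generators, and a short induction/linearity argument then upgrades this to arbitrary $a,b\in\A$ (writing $a,b$ in terms of the spanning set and expanding $ab$ bilinearly).

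For the converse, assume $ab\in\F(1_\A,a,b)$ for all $a,b\in\A$, and let $\s$ be any generating set. I must show $\A=\F\bar\s$, equivalently $\L_1(\s)=\L_2(\s)$, after which the stabilization result gives ${\rm alg}(\s)=\L_1(\s)=\F\bar\s$ and hence $l(\A)\le 1$; since $\A\neq\F 1_\A$ in the nontrivial case, $l(\A)=1$. So it suffices to show that any word of length $2$ in the letters of $\s$ lies in $\F\bar\s=\L_1(\s)$. A word of length $2$ is a product $uv$ with $u,v\in\s$ (the only bracketing of two letters), and the hypothesis applied to $a=u$, $b=v$ gives $uv\in\F(1_\A,u,v)\subseteq\L_1(\s)$ immediately. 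Thus $\L_2(\s)\subseteq\L_1(\s)$, and the reverse inclusion is trivial.

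The main obstacle I anticipate is the bilinearity bookkeeping in the forward direction: the length hypothesis most directly controls products of \emph{generators}, whereas the conclusion asserts $ab\in\F(1_\A,a,b)$ for \emph{all} $a,b$, including elements that are themselves linear combinations of generators. I expect the resolution to be the observation that the property ``$ab\in\F(1_\A,a,b)$'' is not itself linear in $a$ and $b$, so one cannot naively expand; instead I would pass through the subalgebra ${\rm alg}(a,b)$ and argue that its length is at most $1$, which is where the careful use of the stabilization criterion $\L_1=\L_2\Rightarrow\L_1={\rm alg}$ becomes essential. Handling this reduction to two generators cleanly—rather than the full generating set—is the crux of the argument.
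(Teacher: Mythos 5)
Your converse direction is correct and matches the paper's (which simply calls it ``clear''): a word of length $2$ is a product $uv$ with $u,v\in\s$, the hypothesis puts $uv$ in $\F(1_{\A},u,v)\subseteq\L_1(\s)$, so $\L_1(\s)=\L_2(\s)$ and the stabilization criterion finishes it.

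The forward direction, however, has a genuine gap. Your ``cleanest route'' --- complete $\{a,b\}$ to a generating set $\s$ of $\A$ and invoke $\A=\F\bar\s$ --- only yields $ab\in\F\bar\s$, i.e.\ that $ab$ is a linear combination of $1_{\A}$ and \emph{all} the generators in $\s$. That is just the statement $ab\in\A$ and says nothing about $ab$ lying in the much smaller subspace $\F(1_{\A},a,b)$; no amount of ``bilinearity bookkeeping'' recovers the missing information. Your fallback --- pass to $\B={\rm alg}(a,b)$ and argue $l(\B)=1$ --- is the right instinct but, as you yourself note, inheritance of length $1$ by subalgebras does not follow from the definition; in the paper it is Corollary~\ref{2.5}, which is \emph{deduced from} Proposition~\ref{proposition0}, so using it here would be circular, and you do not supply an independent argument. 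The idea you are missing is the paper's removal argument: extend $\{1_{\A},a,b\}$ (assumed linearly independent; the dependent case reduces to the one-variable statement $a^2\in\F(1_{\A},a)$, proved the same way) to a basis $\{1_{\A},a,b,s_4,\dots,s_n\}$ and write $ab=\lambda_1 1_{\A}+\lambda_2 a+\lambda_3 b+\lambda_4 s_4+\cdots+\lambda_n s_n$. If some $\lambda_i\neq 0$ with $i\geq 4$, say $\lambda_4\neq 0$, then $s_4$ can be recovered from $ab$ and the remaining basis vectors, so $\s''=\{1_{\A},a,b,s_5,\dots,s_n\}$ is still a generating set of $\A$; but $\F\s''$ has dimension at most $n-1<n$, so $\A={\rm alg}(\s'')\neq\F\s''$, contradicting $l(\A)=1$. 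That choice of a \emph{specific} generating set on which the length-one hypothesis bites --- rather than an arbitrary one containing $a$ and $b$ --- is the crux, and it is absent from your proposal.
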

\begin{proof}
Let $\A$ be an $\F$-algebra of length 1. We will prove first that $a^{2} \in \F(1_{A}, a)$. If $a \in \F (1_{\A})$, the result is obvious. Therefore we can assume w.l.g. that $\left\{1_{\A}, a\right\}$ is a linearly independent set, and extend it to a basis $\left\{s_1 = a, s_2 = 1_{\A},  s_3, \ldots, s_n\right\}$ of $\A$. So $a^2 = \alpha_1 a + \alpha_2 1_{\A} + \cdots + \alpha_n s_n$ for some $\alpha_1, \ldots, \alpha_n \in \F$. If $a^2 \notin \F (1_{\A}, a)$, then there is some $\alpha_i \neq 0$, $i > 2$. We can assume (reordering the basis if needed) that $\alpha_3 \neq 0$. This implies that $s_3 \in  {\rm alg}(s_1 = a, s_2 = 1_{\A}, s_4, \ldots, s_n)$. Consequently $\A = {\rm alg}(\s') \neq \F  \s' $, where $\overline{\s'} = \s' = \s \setminus \left\{s_3\right\}$, a contradiction.

If $a, b \in \A$, let us consider $\left\{1_{\A}, a, b\right\}$. For $\left\{1_{\A}, a, b\right\}$ linearly dependent the result directly follows from what has been proved above. So we can assume that $\left\{1_{\A}, a, b\right\}$ are linearly independent and extend this set (as we did before) to a basis $\left\{1_{\A}, a, b, s_4, \ldots, s_n\right\}$. Then $ab = \lambda_1 1_{\A} + \lambda_2 a + \lambda_3 b + \lambda_4 s_4 + \cdots + \lambda_n s_n$ for some $\lambda_1, \lambda_2, \lambda_3, \lambda_4, \ldots, \lambda_n \in \F$. If there is some $\lambda_i \neq 0$, $i \geq 4$ (we can assume $\lambda_4 \neq 0)$, then considering $\s'' = \s \setminus \left\{s_4\right\}$ we have, as before, $\A = {\rm alg} (\s'') \neq \F \s''$, a contradiction.

The converse is clear.
\end{proof}

\begin{Corollary}\label{lemma0}
Let $\A$ be a finite dimensional algebra with identity element $1_{\A}$ over a field $\F$. If $l(\A) = 1$, then $a^{2} \in \F(1_{\A}, a)$, for all $a \in \A$.
\end{Corollary}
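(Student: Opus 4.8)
The plan is to derive this directly from Proposition \ref{proposition0}. Since $l(\A) = 1$, that proposition tells us that $ab \in \F(1_{\A}, a, b)$ holds for every pair of elements $a, b \in \A$. The corollary is then the special case $b = a$: I would substitute $b = a$ into this membership and observe that the spanning set $\{1_{\A}, a, a\}$ carries no more information than $\{1_{\A}, a\}$, so that $\F(1_{\A}, a, a) = \F(1_{\A}, a)$. This yields $a^{2} \in \F(1_{\A}, a)$ for all $a \in \A$, as required.

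Alternatively, one can read the conclusion off the proof of Proposition \ref{proposition0} itself: its opening paragraph already establishes precisely the claim $a^{2} \in \F(1_{\A}, a)$ as the first step, via the basis-reduction argument (if $\{1_{\A}, a\}$ is dependent the claim is trivial, and otherwise one extends to a basis and argues that a nonzero coefficient on any $s_{i}$ with $i > 2$ would let one delete $s_{i}$ from a generating set while still generating $\A$, contradicting $l(\A) = 1$). Thus the corollary is a direct corollary in the strict sense, requiring no new ideas.

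There is essentially no obstacle here: the only point meriting a word of care is the collapse $\F(1_{\A}, a, a) = \F(1_{\A}, a)$, which is immediate since repeating a spanning vector does not enlarge the span. I would therefore present the proof in a single line, invoking Proposition \ref{proposition0} with $b = a$.

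\begin{proof}
Since $l(\A) = 1$, Proposition \ref{proposition0} gives $ab \in \F(1_{\A}, a, b)$ for all $a, b \in \A$. Taking $b = a$ and noting that $\F(1_{\A}, a, a) = \F(1_{\A}, a)$, we obtain $a^{2} \in \F(1_{\A}, a)$ for every $a \in \A$.
\end{proof}
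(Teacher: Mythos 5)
Your proof is correct and matches the paper's intent exactly: the corollary is stated without proof as an immediate consequence of Proposition \ref{proposition0} (take $b=a$ and note $\F(1_{\A},a,a)=\F(1_{\A},a)$), and as you observe, the first paragraph of that proposition's proof already establishes the claim directly. Nothing further is needed.
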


Next results  immediately follow from Proposition \ref{proposition0}.

\begin{Corollary}
Every unital algebra of dimension $2$ over an arbitrary field has length $1$.
\end{Corollary}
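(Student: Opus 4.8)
The plan is to apply Proposition \ref{proposition0}, which reduces the statement to verifying the condition $ab \in \F(1_\A, a, b)$ for all $a, b \in \A$. Because $\dim \A = 2$, this verification collapses to a very short case analysis, so I expect no genuine difficulty.

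The single structural fact I would isolate first is the following: since $\F 1_\A$ is one-dimensional and $\dim \A = 2$, any element $c \in \A$ with $c \notin \F 1_\A$ makes $\{1_\A, c\}$ a linearly independent set, hence a basis of $\A$, so that $\F(1_\A, c) = \A$. With this in hand I would fix arbitrary $a, b \in \A$ and distinguish two cases. If at least one of $a, b$ lies outside $\F 1_\A$ --- say $a \notin \F 1_\A$ --- then $\F(1_\A, a) = \A$, and therefore $\F(1_\A, a, b) = \A$; since $\A$ is closed under multiplication, $ab \in \A = \F(1_\A, a, b)$ and the condition holds. If instead both $a$ and $b$ lie in $\F 1_\A$, I would write $a = \lambda 1_\A$ and $b = \mu 1_\A$, so that $ab = \lambda\mu\,1_\A \in \F 1_\A \subseteq \F(1_\A, a, b)$.

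Since the required condition holds in every case, Proposition \ref{proposition0} yields $l(\A) = 1$. There is essentially no obstacle to surmount here; the only point deserving a moment's attention is the degenerate case in which $a$ and $b$ are both scalar multiples of $1_\A$, where $\F(1_\A, a, b)$ shrinks to $\F 1_\A$ instead of spanning all of $\A$, and so must be treated separately rather than being absorbed into the generic argument.
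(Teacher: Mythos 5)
Your argument is correct and is exactly the routine verification the paper has in mind when it states that this corollary follows immediately from Proposition \ref{proposition0}: in dimension $2$ the subspace $\F(1_{\A},a,b)$ is all of $\A$ unless both $a,b\in\F 1_{\A}$, and that degenerate case is trivial. No difference in approach; the paper simply omits these details.
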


Let us notice that any 2-dimensional unital algebra is associative.

A power-associative algebra over a field $\F$ is an algebra in which every subalgebra generated by a single element is associative.  Examples include associative, Lie, Jordan  and  alternative algebras.

Since the unital subalgebra generated by an element of an algebra $\A$ with $l(\A) = 1$ has dimension at most 2, it is associative. Now next corollary immediately follows.

\begin{Corollary}
An algebra $\A$  of length $1$  over a field $\F$ is power-associative.
\end{Corollary}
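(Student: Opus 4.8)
The plan is to unwind the definition of power-associativity and reduce it to the two-dimensionality already guaranteed by Corollary~\ref{lemma0}. By definition I must show that for every $a \in \A$ the subalgebra ${\rm alg}(a)$ generated by the single element $a$ is associative. Since ${\rm alg}(a) \subseteq {\rm alg}(1_{\A}, a)$ and any subalgebra of an associative algebra is itself associative, it suffices to prove that the unital subalgebra $\B = {\rm alg}(1_{\A}, a)$ is associative.

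First I would identify $\B$ explicitly. Corollary~\ref{lemma0} gives $a^{2} \in \F(1_{\A}, a)$, and from this a direct bilinear computation shows that the two-dimensional space $\F(1_{\A}, a)$ is already closed under multiplication: for $x = \alpha 1_{\A} + \beta a$ and $y = \gamma 1_{\A} + \delta a$ one expands $xy = \alpha\gamma 1_{\A} + (\alpha\delta + \beta\gamma) a + \beta\delta\, a^{2}$, and substituting $a^{2} \in \F(1_{\A}, a)$ returns an element of $\F(1_{\A}, a)$. Hence $\B = \F(1_{\A}, a)$ and $\dim \B \leq 2$. I would stress that carrying out the closure argument at the level of the span $\F(1_{\A}, a)$, rather than inductively on the powers $a^{k}$, is what keeps the argument clean: in a non-associative setting the symbol $a^{k}$ for $k \geq 3$ is a priori ambiguous, and this reformulation sidesteps the issue entirely.

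It then remains to observe that a unital algebra of dimension at most $2$ is associative. If $\dim \B = 1$ then $\B = \F 1_{\A}$ and there is nothing to prove. If $\dim \B = 2$ with basis $\{1_{\A}, a\}$, associativity $(xy)z = x(yz)$ is trilinear, so it is enough to verify it on basis vectors; every instance involving $1_{\A}$ holds automatically, and the only remaining identity $(aa)a = a(aa)$ holds because $a^{2} = \alpha 1_{\A} + \beta a$ is a linear combination of $1_{\A}$ and $a$, each of which trivially associates with $a$. Thus $\B$ is associative, whence ${\rm alg}(a)$ is associative, and as $a$ was arbitrary the algebra $\A$ is power-associative. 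The statement is a short consequence of Corollary~\ref{lemma0}, so I do not expect a genuine obstacle; the only point demanding care is precisely the reduction described above, which keeps the whole argument at the level of the explicit two-dimensional span $\F(1_{\A}, a)$.
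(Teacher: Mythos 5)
Your proof is correct and follows essentially the same route as the paper: the paper likewise observes that, by Corollary~\ref{lemma0}, the unital subalgebra generated by a single element has dimension at most $2$ and is therefore associative, so power-associativity follows. You have merely filled in the details (closure of $\F(1_{\A},a)$ under multiplication and the basis-vector check of associativity in dimension $2$) that the paper leaves implicit.
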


\begin{Corollary}\label{2.5}
Let $\mathcal{B}$ be a subalgebra of a unital finite-dimensional $\F$-algebra $\A$. If $l(\A) = 1$, then $l(\mathcal{B}) = 1$.
\end{Corollary}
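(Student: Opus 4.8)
The plan is to reduce everything to the pointwise criterion of Proposition \ref{proposition0}. Since the length is unchanged under passing to the unital hull (recall $l(\mathcal{B}) = l(\tilde{\mathcal{B}})$ for $\tilde{\mathcal{B}} = \F 1 + \mathcal{B}$), it suffices to verify that $xy \in \F(1, x, y)$ for all $x, y$ in that unital algebra. Because adding scalar multiples of an identity to $x$ and $y$ does not alter the span $\F(1, x, y)$, the condition for $\tilde{\mathcal{B}}$ reduces to checking that $ab \in \F(1_{\tilde{\mathcal{B}}}, a, b)$ for all $a, b \in \mathcal{B}$.

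So I would fix $a, b \in \mathcal{B}$. Since $\mathcal{B} \subseteq \A$ and $l(\A) = 1$, Proposition \ref{proposition0} furnishes scalars with $ab = \lambda_1 1_{\A} + \lambda_2 a + \lambda_3 b$. The one genuine subtlety, and the point requiring care, is that a subalgebra need not contain $1_{\A}$, so this expression is a priori written in terms of the ambient identity rather than one belonging to $\mathcal{B}$.

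I would resolve this by distinguishing two cases. If $1_{\A} \in \mathcal{B}$, then $\mathcal{B}$ is already unital with $1_{\mathcal{B}} = 1_{\A}$ and $\tilde{\mathcal{B}} = \mathcal{B}$, so the displayed relation already exhibits $ab \in \F(1_{\mathcal{B}}, a, b)$. If $1_{\A} \notin \mathcal{B}$, then since $ab, a, b$ all lie in $\mathcal{B}$, the element $\lambda_1 1_{\A} = ab - \lambda_2 a - \lambda_3 b$ lies in $\mathcal{B}$ as well; as $1_{\A} \notin \mathcal{B}$, this forces $\lambda_1 = 0$, whence $ab = \lambda_2 a + \lambda_3 b \in \F(a, b) \subseteq \F(1_{\tilde{\mathcal{B}}}, a, b)$.

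In either case the hypothesis of Proposition \ref{proposition0} holds for $\tilde{\mathcal{B}}$, so that $l(\tilde{\mathcal{B}}) = 1$, and since $l(\mathcal{B}) = l(\tilde{\mathcal{B}})$ we conclude $l(\mathcal{B}) = 1$. I expect the case analysis around the presence of $1_{\A}$ to be the only real obstacle; everything else is a direct application of the characterization already established.
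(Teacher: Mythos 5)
Your argument is correct and is exactly the route the paper intends: Corollary \ref{2.5} is stated there as an immediate consequence of Proposition \ref{proposition0}, and your write-up simply makes explicit the reduction to the unital hull and the (genuinely worth noting) point that $1_{\A}$ need not lie in $\mathcal{B}$, resolved by observing that $\lambda_1 1_{\A} \in \mathcal{B}$ forces $\lambda_1 = 0$ when $1_{\A} \notin \mathcal{B}$. No substantive difference from the paper's approach.
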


Notice that the above Corollaries \ref{lemma0} and \ref{2.5} extend results of Corollary 2.2 and Lemma 2.1 of \cite{Mar12} from associative case.

In general, it is not true that $l(\B) \leq l(\A)$ for any $\B$ subalgebra of $\A$, even in the associative case (see \cite{GutMM09}).

\section{Characteristic different from 2}

In this section we will address the aim of the paper in the case of characteristic different from 2. So in this section $\F$ will denote a field with ${\rm char}(\F) \neq  2$.

\begin{Lemma}\label{lemma3.1}
Let $\A$ be an $\F$-algebra  of dimension $n$ and length $1$ with identity element $1_{\A}$. Then
there is a basis $\B$ of $\A$ such that $b^{2} \in \F 1_{\A}$, for every $b \in \B$.
\end{Lemma}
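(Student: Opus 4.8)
The plan is to use the completing-the-square technique, which is available precisely because $\chr(\F) \neq 2$. By Corollary~\ref{lemma0}, every element $a \in \A$ satisfies $a^2 = \lambda_a 1_\A + \mu_a a$ for suitable scalars $\lambda_a, \mu_a \in \F$ (uniquely determined when $\{1_\A, a\}$ is linearly independent, and with $\mu_a = 0$ otherwise). The key observation is that the deviation of $a^2$ from $\F 1_\A$ is governed entirely by the linear coefficient $\mu_a$, and a scalar shift of $a$ by a multiple of $1_\A$ can be used to annihilate it.

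Concretely, I would set $b = a - \frac{\mu_a}{2} 1_\A$. Using only bilinearity of the product together with the identity axioms $a \cdot 1_\A = 1_\A \cdot a = a$ and $1_\A^2 = 1_\A$, one expands
\begin{equation*}
b^2 = a^2 - \mu_a a + \frac{\mu_a^2}{4} 1_\A = \left(\lambda_a + \frac{\mu_a^2}{4}\right) 1_\A,
\end{equation*}
so that $b^2 \in \F 1_\A$. Note that the division by $2$ is exactly where $\chr(\F) \neq 2$ is used; no associativity or commutativity of $\A$ is required, only that $1_\A$ is a two-sided identity.

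To assemble the full basis, I would start from any basis of $\A$ of the form $\{1_\A, a_2, \ldots, a_n\}$ (which exists since $1_\A \neq 0$), and replace each $a_i$, $i \geq 2$, by $b_i = a_i - \frac{\mu_{a_i}}{2} 1_\A$. Each $b_i^2 \in \F 1_\A$ by the computation above, while $1_\A^2 = 1_\A \in \F 1_\A$ trivially. It then remains to verify that $\B = \{1_\A, b_2, \ldots, b_n\}$ is again a basis: since each $b_i$ differs from $a_i$ only by a scalar multiple of the first basis vector $1_\A$, the change-of-basis matrix is unitriangular, hence invertible, and $\B$ spans $\A$.

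I do not expect a serious obstacle in this argument, as it reduces to a direct completing-the-square computation followed by an elementary change-of-basis check. The only point deserving emphasis is that the construction genuinely breaks down in characteristic $2$, where the coefficient $\mu_a$ cannot be absorbed by a scalar shift; this is consistent with the paper treating $\chr(\F) = 2$ by separate methods.
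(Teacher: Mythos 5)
Your proof is correct and follows essentially the same route as the paper: the paper also starts from a basis $\left\{a_1=1_{\A},a_2,\ldots,a_n\right\}$, writes $a_i^2=\alpha_i 1_{\A}+\gamma_i a_i$ via Corollary~\ref{lemma0}, and replaces each $a_i$ by $a_i-\frac{1}{2}\gamma_i 1_{\A}$. You merely make explicit the completing-the-square computation and the unitriangular change-of-basis check that the paper leaves as ``easy to check.''
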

\begin{proof}
Let $\left\{a_{1} = 1_{\A}, a_{2}, \ldots , a_{n}\right\}$ be a basis of $\A$. According to Corollary \ref{lemma0}, we can write $a_{i}^{2} = \alpha_{i}1_{\A} + \gamma_{i}a_{i}$, for some $\alpha_{i}, \gamma_{i} \in \F$, for all $2 \leq i \leq n$. It is easy to check that the basis $\B = \left\{a_{1} = 1_{\A}, a_{2} - \frac{1}{2} \gamma_{2} 1_{\A}, \ldots , a_{n} - \frac{1}{2} \gamma_{n} 1_{\A}\right\}$ satisfies the required conditions.
\end{proof}

\begin{Definition}
A basis $\B = \left\{1_{\A}, a_2, \ldots, a_n\right\}$ of a unital algebra $\A$ satisfying that $a_{i}^2 = \mu_i 1_{\A}$ for some $\mu_i \in \F$, $2 \leq i \leq n$ will be called canonical basis.
\end{Definition}

Notice that Lemma \ref{lemma3.1} says that an algebra of length 1 has some canonical basis.

\begin{Lemma}\label{3.2}
Let $\A$ be an $\F$-algebra of length  1 with identity element $1_{\A}$ and  let $ \B =\left\{a_{1} = 1_{\A}, a_{2},
a_{3}, \ldots, a_{n}\right\}$  be a canonical basis of $\A$. Then  $a_{i}a_{j} + a_{j}a_{i} \in \F 1_{\A}$, for every $2 \leq i \neq j \leq n$.
\end{Lemma}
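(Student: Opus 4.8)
The plan is to exploit Proposition \ref{proposition0} together with the fact that the given basis is canonical, so $a_i^2 = \mu_i 1_{\A}$ and $a_j^2 = \mu_j 1_{\A}$. First I would fix $2 \le i \ne j \le n$ and apply Proposition \ref{proposition0} to the pair $(a_i, a_j)$: since $\A$ has length $1$, we have $a_i a_j \in \F(1_{\A}, a_i, a_j)$, so we may write
\begin{equation}
a_i a_j = \alpha\, 1_{\A} + \beta a_i + \gamma a_j
\end{equation}
for some $\alpha, \beta, \gamma \in \F$. Symmetrically, applying the proposition to $(a_j, a_i)$ gives $a_j a_i = \alpha' 1_{\A} + \beta' a_i + \gamma' a_j$. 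Adding these, the claim $a_i a_j + a_j a_i \in \F 1_{\A}$ is exactly the assertion that the combined $a_i$- and $a_j$-coefficients vanish, i.e. $\beta + \beta' = 0$ and $\gamma + \gamma' = 0$. So the real content is to show that the coefficients of $a_i$ and of $a_j$ in the symmetrized product are zero.

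To pin down those coefficients I would bring in a second element whose square is controlled, namely $a_i + a_j$, and use that the algebra is power-associative (Corollary following Proposition \ref{proposition0}). By Corollary \ref{lemma0} applied to $c = a_i + a_j$, we know $c^2 \in \F(1_{\A}, c)$, so $(a_i + a_j)^2 = \lambda 1_{\A} + \rho(a_i + a_j)$ for some $\lambda, \rho \in \F$. Expanding the left-hand side using $a_i^2 = \mu_i 1_{\A}$ and $a_j^2 = \mu_j 1_{\A}$ gives
\begin{equation}
(a_i + a_j)^2 = \mu_i 1_{\A} + \mu_j 1_{\A} + (a_i a_j + a_j a_i).
\end{equation}
Comparing this with $\lambda 1_{\A} + \rho a_i + \rho a_j$ and substituting the expression for $a_i a_j + a_j a_i$ obtained in the first step, I can read off that the coefficient of $a_i$ equals the coefficient of $a_j$, namely $\beta + \beta' = \gamma + \gamma' = \rho$. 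Thus the symmetrized product lies in $\F 1_{\A} + \F(a_i + a_j)$.

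It remains to force $\rho = 0$, and here is where I expect the main obstacle: ruling out a nonzero diagonal-symmetric contribution. The key leverage is that $\chr \F \ne 2$, which is the standing hypothesis of this section, so I can symmetrize freely and divide by $2$. One clean route is to observe that since $\{1_{\A}, a_i, a_j\}$ is linearly independent (they are distinct basis vectors), the decomposition $a_i a_j + a_j a_i = \lambda 1_{\A} + \rho(a_i + a_j)$ is unique; I would then test it against a further power-associativity or length-$1$ relation, for instance applying Proposition \ref{proposition0} to the pair $(a_i + a_j,\, a_i - a_j)$ or computing $(a_i - a_j)^2$ in the same way, which yields $(a_i - a_j)^2 = \mu_i 1_{\A} + \mu_j 1_{\A} - (a_i a_j + a_j a_i) \in \F(1_{\A}, a_i - a_j)$. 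Adding the two expansions for $(a_i \pm a_j)^2$ recovers $\mu_i + \mu_j$ on the diagonal, while subtracting them isolates $a_i a_j + a_j a_i$ and shows its $a_i$-component must equal its $a_j$-component and simultaneously its $a_i$-component must equal the negative of its $a_j$-component, forcing $\rho = 0$. With $\rho = 0$ established, $a_i a_j + a_j a_i = \lambda 1_{\A} \in \F 1_{\A}$, completing the proof. The delicate point throughout is keeping the two products $a_i a_j$ and $a_j a_i$ separate until the symmetrization, since non-associativity (indeed non-commutativity) means we may not assume they are equal.
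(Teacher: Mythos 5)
Your argument is correct and is essentially the paper's proof: both write $a_ia_j$ and $a_ja_i$ in terms of $\{1_{\A},a_i,a_j\}$ via Proposition \ref{proposition0} and then apply Corollary \ref{lemma0} to squares of linear combinations $a_i+\lambda a_j$ to kill the $a_i$- and $a_j$-components of the symmetrized product. The only difference is that you specialize to $\lambda=\pm 1$ and conclude from $\rho=-\rho$ using $\chr\F\neq 2$, whereas the paper keeps $\lambda$ generic and uses that a quadratic vanishing on all of $\F$ with $|\F|\geq 3$ has zero coefficients; both are valid under the section's standing hypothesis.
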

\begin{proof}
Since $\B$ is a canonical basis, $a_{i}^2 = \mu_i 1_{\A}$, for some $\mu_i \in \F$, $2 \leq i \leq n$.

By Proposition \ref{proposition0}, $a_{i}a_{j} \in \F(1_{\A}, a_i, a_j)$  for every  pair of indices $i \neq j \in \{2, \ldots, n \}$.  So, there are elements $\alpha_{ij}, \beta_{ij}, \beta_{ji}^{*},  \alpha_{ji}, \beta_{ji}, \beta_{ij}^{*} \in \F$, $2 \leq i < j \leq n$, such that  $a_{i}a_{j} = \alpha_{ij}1_{\A} + \beta_{ij}a_{i} + \beta_{ji}^{*}a_{j}$  and  $a_{j}a_{i} = \alpha_{ji}1_{\A} + \beta_{ji}a_{j} + \beta_{ij}^{*}a_{i}$.

By Corollary \ref{lemma0} we know that for an arbitrary $\lambda \in \F$, $(a_{i} + \lambda a_{j})^{2} \in \F(1_{\A}, a_{i} + \lambda a_{j}).$ Since $(a_{i} + \lambda a_{j})^{2}=  a_i^2 + \lambda a_i a_j + \lambda a_j a_i + \lambda^2a_j^2 =$
\begin{align*}
=( \mu_{i} + \lambda^{2} \mu_{j} ) 1_{\A} + \lambda (\alpha_{ij} 1_{\A} + \beta_{ij}a_{i} + \beta_{ji}^{*}a_{j}) + \lambda(\alpha_{ji}1_{\A} + \beta_{ji}a_{j} + \beta_{ij}^{*}a_{i}),
\end{align*}
we conclude that the coefficient of $a_j$ is $\lambda$ times the coefficient of $a_i$, that is, $\lambda (\beta_{ji}^{*} + \beta_{ji}) = \lambda^{2} (\beta_{ij} + \beta_{ij}^{*}).$

Since $|\F| \geq 3$  and every $\lambda \in \F$ is a root of  $x (\beta_{ji}^{*} + \beta_{ji}) = x^2(\beta_{ij} + \beta_{ij}^{*}),$  we conclude that  $\beta_{ji}^{*} + \beta_{ji}  = 0$ for every distinct $i, j \in \{2, \ldots, n \}$.  That is,  $a_{i}a_{j} = \alpha_{ij}1_{\A} + \beta_{ij}a_{i} - \beta_{ji}a_{j}$, $a_{j}a_{i} = \alpha_{ji}1_{\A} + \beta_{ji}a_{j} - \beta_{ij}a_{i}$.

This implies that  $a_{i}a_{j} + a_{j}a_{i} = (\alpha_{ij} + \alpha_{ji})1_{\A}$,  and completes the proof.
\end{proof}

We will consider first the case ${\rm dim} \ \A = 3$, since it behaves in a different way.

\begin{Proposition} Let $\A$ be a unital  $\F$-algebra of dimension $3$.
If there is a canonical basis $\left\{a_{1} = 1_{\A}, a_{2}, a_{3}\right\}$ such that $a_{2}a_{3} + a_{3}a_{2} \in \F 1_{\A}$ then $l(\A) = 1$.
\end{Proposition}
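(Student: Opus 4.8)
The plan is to verify the criterion of Proposition \ref{proposition0}: it suffices to show that $ab \in \F(1_\A, a, b)$ for every $a, b \in \A$. The decisive simplification comes from the dimension hypothesis. If $\{1_\A, a, b\}$ is linearly independent, then these three vectors form a basis of the $3$-dimensional space $\A$, so $\F(1_\A, a, b) = \A$ and the containment $ab \in \F(1_\A, a, b)$ holds trivially. Hence only the case where $\{1_\A, a, b\}$ is linearly dependent requires genuine work.

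I would next observe that linear dependence of $\{1_\A, a, b\}$ forces one of $a, b$ to lie in the linear span of $1_\A$ and the other: a nontrivial relation $\gamma 1_\A + \delta a + \epsilon b = 0$ cannot have $\delta = \epsilon = 0$, since $1_\A \neq 0$, so either $b \in \F(1_\A, a)$ or $a \in \F(1_\A, b)$. In the first case, writing $b = \alpha 1_\A + \beta a$ gives $ab = \alpha a + \beta a^2$; in the second, $ab = \alpha b + \beta b^2$. Either way the whole dependent case reduces to the single assertion that $c^2 \in \F(1_\A, c)$ for every $c \in \A$, since then $ab$ is a scalar combination of one of the generators and its square, hence lies in $\F(1_\A, a, b)$.

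The heart of the argument is this last computation, which I would carry out directly in the canonical basis. Writing $a = x_1 1_\A + x_2 a_2 + x_3 a_3$ and using $a_2^2 = \mu_2 1_\A$, $a_3^2 = \mu_3 1_\A$ together with the hypothesis $a_2 a_3 + a_3 a_2 = \nu 1_\A$ for some $\nu \in \F$, the square expands to
\begin{align*}
a^2 = \bigl(x_1^2 + \mu_2 x_2^2 + \mu_3 x_3^2 + \nu\, x_2 x_3\bigr) 1_\A + 2 x_1 (x_2 a_2 + x_3 a_3).
\end{align*}
Since $x_2 a_2 + x_3 a_3 = a - x_1 1_\A$, the right-hand side is a linear combination of $1_\A$ and $a$, so $a^2 \in \F(1_\A, a)$, as required.

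The point I would emphasize is that the products $a_2 a_3$ and $a_3 a_2$ are \emph{not} individually controlled by the hypotheses, yet they never appear in isolation: in the expansion of a square only their symmetric sum $a_2 a_3 + a_3 a_2$ occurs, and this sum is precisely what the hypothesis places in $\F 1_\A$. This is the only step where the canonical basis and the symmetry hypothesis are actually used; everything else is forced by $\dim \A = 3$, which is exactly why the three-dimensional case goes through without any further structural information. I expect the mild subtlety of correctly bookkeeping the two cross terms with $1_\A$ (giving the factor $2$) and the single symmetric cross term to be the only place where care is needed.
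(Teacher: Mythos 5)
Your proof is correct and follows essentially the same route as the paper: reduce via the dimension-$3$ observation to the single claim $c^{2}\in\F(1_{\A},c)$, then expand the square in the canonical basis so that only the symmetric sum $a_{2}a_{3}+a_{3}a_{2}$ appears. The only (immaterial) difference is that you split the linearly dependent case symmetrically into $a\in\F(1_{\A},b)$ or $b\in\F(1_{\A},a)$, while the paper cases on whether the coefficient of $b$ in the dependence relation vanishes.
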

\begin{proof}
Let $a$ and $b$ be arbitrary elements of $\A$. If $\s = \left\{1_{\A}, a, b\right\}$ is a linearly independent set, then it is a basis of $\A$ and so $ab \in \F \s = \F(1_{\A}, a, b)$.

Otherwise, there are scalars $\alpha, \beta, \gamma \in \F$, not all of them equal to zero, such that $\alpha 1_{\A} + \beta a + \gamma b = 0$.

If   $\gamma = 0$, then $\alpha 1_{\A} + \beta a = 0$, that is, $a \in \F 1_{\A}$. This implies that
$ab \in \F b \subseteq \F(1_{\A}, b)$.

If  $\gamma \neq 0$,  $b \in \F(1_{\A}, a)$, so $ab \in \F(a, a^{2})$. So, in order  to prove that $l(\A) = 1$, we only need to show that $a^{2} \in \F(1_{\A}, a)$, for every $a \in \A$.

Given  $a = \lambda_{1}1_{\A} + \lambda_{2}a_{2} + \lambda_{3}a_{3} \in \A$, using that $\B$ is a canonical basis it follows that $a^2 =$
$$=\lambda_{1}(\lambda_{1}1_{\A} + \lambda_{2}a_{2} + \lambda_{3}a_{3}) + \lambda_{2}a_{2}(\lambda_{1}1_{\A} + \lambda_{2}a_{2} + \lambda_{3}a_{3} ) + \lambda_{3}a_3(\lambda_{1}1_{\A} + \lambda_{2}a_{2} + \lambda_{3}a_{3})$$
\begin{flushleft}
$=2\lambda_{1}(\lambda_{1}1_{\A} + \lambda_{2}a_{2} + \lambda_{3}a_{3}) - \lambda_{1}^{2}1_{\A} + \lambda_{2}^{2}a_{2}^{2} + \lambda_{3}^{2}a_{3}^2 + \lambda_{2}\lambda_{3}(a_{2}a_{3} + a_{3}a_{2})$
\vspace{0.5cm}

$=2\lambda_{1}a - \lambda_{1}^{2}1_{\A} + \lambda_{2}^{2}a_{2}^{2} + \lambda_{3}^{2}a_{3}^2 + \lambda_{2}\lambda_{3}(a_{2}a_{3} + a_{3}a_{2})   \in \F(1_{\A}, a)$,
\end{flushleft}
since $a_{2}a_{3} + a_{3}a_{2}, a_2^2, a_3^2 \in \F 1_{\A}$.
\end{proof}

\begin{Corollary} Let $\A$ be a $3$-dimensional  $\F$-algebra with identity element $1_{\A}$. Then $l(\A) = 1$ if and only if  there is a canonical basis $\left\{a_{1} = 1_{\A}, a_{2}, a_{3}\right\}$ of $\A$ such that $a_2 a_3 + a_3 a_2 \in \F 1_{\A}$.
\end{Corollary}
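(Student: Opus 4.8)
The statement to prove is a biconditional characterizing when a $3$-dimensional unital $\F$-algebra has length $1$, in terms of the existence of a canonical basis $\{1_\A, a_2, a_3\}$ with $a_2a_3+a_3a_2 \in \F 1_\A$. The plan is to establish both directions by assembling the preceding results, since each implication has already been essentially isolated in the excerpt.

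For the forward direction, suppose $l(\A)=1$. First I would invoke Lemma \ref{lemma3.1}, which guarantees that $\A$ possesses a canonical basis $\B = \{a_1 = 1_\A, a_2, a_3\}$, meaning $a_i^2 \in \F 1_\A$ for $i=2,3$. Then I would apply Lemma \ref{3.2} directly to this canonical basis: with $n=3$ and the only admissible pair of indices being $i=2$, $j=3$, the lemma yields precisely $a_2a_3 + a_3a_2 \in \F 1_\A$. This completes the forward implication with essentially no new computation, since both ingredients are already proved.

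For the converse direction, suppose there exists a canonical basis $\{a_1 = 1_\A, a_2, a_3\}$ with $a_2a_3 + a_3a_2 \in \F 1_\A$. This is exactly the hypothesis of the Proposition immediately preceding this Corollary, whose conclusion is $l(\A)=1$. So I would simply cite that Proposition. Thus the converse is immediate as well.

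I do not anticipate a genuine obstacle, since the Corollary is a clean repackaging of the two flanking results (the Proposition supplying the ``if'' and the combination of Lemma \ref{lemma3.1} with Lemma \ref{3.2} supplying the ``only if''). The one point requiring mild care is the standing hypothesis $\chr \F \neq 2$ in this section: Lemma \ref{lemma3.1} relies on dividing by $2$ to symmetrize the basis, and Lemma \ref{3.2} uses $|\F|\geq 3$ to conclude that a quadratic polynomial vanishing at all $\lambda \in \F$ must be zero. Both conditions hold under the blanket assumption $\chr \F \neq 2$ of the section, so the forward direction is valid as stated; I would note that this is why the result is placed here rather than in full generality. The reverse direction, by contrast, uses only the multiplication rules and the identity $(\lambda_1 1_\A + \lambda_2 a_2 + \lambda_3 a_3)^2 \in \F(1_\A, a)$, which holds verbatim from the displayed computation in the Proposition, so no further hypotheses are needed there.
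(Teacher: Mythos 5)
Your proposal is correct and follows exactly the route the paper intends: the Corollary is stated without proof precisely because the ``if'' direction is the immediately preceding Proposition and the ``only if'' direction is the combination of Lemma \ref{lemma3.1} (existence of a canonical basis) with Lemma \ref{3.2} (the anticommutator condition), applied with $n=3$. Your added remarks about where ${\rm char}\,\F \neq 2$ is actually used are accurate and do not change the argument.
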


\begin{Definition}\label{definition3.6}
Let $\A$ be a unital $\F$-algebra. A canonical basis of $\A$, $\B = \left\{1_{\A}, a_2, \ldots, a_n\right\}$ $(a_i^2 = \mu_i 1_{\A}, \mu_i \in \F)$ will be called special if there are elements $\alpha_{ij}, \beta_i \in \F$, for every $2 \leq i \neq j \leq n$ satisfying $a_i a_j = \alpha_{ij} 1_{\A} + \beta_j a_i - \beta_i a_j$.
\end{Definition}

We have proved in Lemma \ref{lemma3.1} that a unital algebra $\A$ of length 1 has always a canonical basis. Furthermore, Lemma \ref{3.2} gives a way to get a canonical basis from any basis of $\A$ that contains $1_{\A}$. Next proposition will show that an arbitrary canonical basis of $\A$ is special.

\begin{Proposition}\label{theorem3.5}
Let $\A$ be a unital algebra of length $1$. Then an arbitrary canonical basis $\B = \left\{1_{\A}, a_2, \ldots, a_n\right\}$ is special.
\end{Proposition}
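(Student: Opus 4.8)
The plan is to pin down the exact shape of each product $a_i a_j$ and then prove that the coefficient of the first factor depends only on the index of the second factor; this last statement is precisely what Definition~\ref{definition3.6} demands.

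First I would record the structure constants in a convenient form. By Proposition~\ref{proposition0}, for distinct $i,j\in\{2,\ldots,n\}$ we may write $a_i a_j=\gamma_{ij}1_{\A}+u_{ij}a_i+v_{ij}a_j$ for scalars $\gamma_{ij},u_{ij},v_{ij}\in\F$, where $u_{ij}$ is the coefficient of the first factor and $v_{ij}$ that of the second. Adding the analogous expansion of $a_j a_i$ and invoking Lemma~\ref{3.2} (which gives $a_i a_j+a_j a_i\in\F 1_{\A}$), the vanishing of the $a_i$- and $a_j$-coefficients forces $u_{ij}=-v_{ji}$. Writing $\beta_{ij}:=u_{ij}$, this becomes
\[
a_i a_j=\gamma_{ij}1_{\A}+\beta_{ij}a_i-\beta_{ji}a_j\qquad(i\neq j).
\]
Hence the only thing left to show is that $\beta_{ij}$ does not depend on $i$: if so, setting $\beta_j:=\beta_{ij}$ turns the displayed identity into $a_i a_j=\gamma_{ij}1_{\A}+\beta_j a_i-\beta_i a_j$, which is the special form (note $\beta_{ji}$ has second index $i$, so it equals $\beta_i$ automatically once the independence is established).

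The key step is a single linearization. Fix a second index $j$ and two distinct first indices $i,k$, all lying in $\{2,\ldots,n\}$ and pairwise distinct; this presupposes $n\geq 4$, while the cases $n\leq 3$ are immediate, since then each second index admits at most one first index and $\beta_j$ may simply be \emph{defined} as the relevant coefficient. Applying Proposition~\ref{proposition0} to $a:=a_i+a_k$ and $b:=a_j$ gives $(a_i+a_k)a_j\in\F(1_{\A},a_i+a_k,a_j)$. On the other hand, expanding in the canonical basis,
\[
(a_i+a_k)a_j=(\gamma_{ij}+\gamma_{kj})1_{\A}+\beta_{ij}a_i+\beta_{kj}a_k-(\beta_{ji}+\beta_{jk})a_j.
\]
Since $\{1_{\A},a_i,a_k,a_j\}$ is linearly independent and every element of $\F(1_{\A},a_i+a_k,a_j)$ has equal $a_i$- and $a_k$-coefficients, I conclude $\beta_{ij}=\beta_{kj}$. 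As $i,k$ were arbitrary distinct indices different from $j$, the coefficient $\beta_{ij}$ depends only on $j$, and defining $\beta_j:=\beta_{ij}$ completes the argument.

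I do not anticipate a genuine obstacle: the whole content lies in choosing the right test element, namely the sum $a_i+a_k$ multiplied by $a_j$, so that the span $\F(1_{\A},a_i+a_k,a_j)$ can only tolerate equal $a_i$- and $a_k$-components. The points that require a little care are purely organisational: the bookkeeping that identifies the second-factor coefficient as $-\beta_{ji}$ (so that the independence of $\beta_{ij}$ in $i$ simultaneously delivers both the $\beta_j a_i$ and the $-\beta_i a_j$ terms), and the small-dimensional cases $n\leq 3$, where the linearization has no room to run but the conclusion is trivially available. The reasoning is in fact characteristic-free, using only Proposition~\ref{proposition0} and Lemma~\ref{3.2}.
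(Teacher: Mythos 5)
Your proof is correct and follows essentially the same route as the paper: first use Lemma~\ref{3.2} to put each product in the form $a_ia_j=\alpha_{ij}1_{\A}+\beta_{ij}a_i-\beta_{ji}a_j$, then apply Proposition~\ref{proposition0} to a product involving a sum of two basis vectors to force the structure constants to depend only on one index. The only (immaterial) difference is that you linearize the first factor, computing $(a_i+a_k)a_j$, while the paper linearizes the second, computing $a_i(a_j+\mu a_k)$; both yield the same independence statement, and your treatment of the low-dimensional cases matches the paper's.
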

\begin{proof}
Assume  $l(\A)=1$. The case ${\rm dim}(A) \leq 3$ is obvious, by Lemma \ref{3.2}. So we will assume ${\rm dim}(\A) \geq 4$. Using Lemma \ref{3.2} we know the existence of scalars $\mu_i$, $i \in \left\{2, \ldots, n\right\}$ and $\alpha_{ij}, \beta_{ij}$ for arbitrary distinct indices $i, j \in \left\{2, \ldots, n\right\}$ such that $a_{i}^2 = \mu_i 1_{\A}$, $a_i a_j = \alpha_{ij} 1_{\A} + \beta_{ij} a_{i} - \beta_{ji} a_{j}$.

Consider arbitrary  distinct indices $i, j, k \in$ $\left\{2, \ldots, n\right\}$.  Then for every  $\mu \in \F$ we have $$a_i(a_j+\mu a_k)\in \F(1_{\A}, a_i, a_j+ \mu a_k).$$
 As we have argued above, from  $a_i(a_j+ \mu a_k)=$ $$=\alpha_{ij}1_{\A} +\beta_{ij}a_i-\beta_{ji}a_j+ \mu \alpha_{ik}1_{\A}+ \mu \beta_{ik}a_i- \mu \beta_{ki}a_k= \alpha 1_{\A}+ \beta a_i + \gamma (a_j+ \mu a_k),$$
it follows that the coefficient of $a_k$ is equal to $\mu$ times the coefficient of $a_j$, that is, $\beta_{ki}=\beta_{ji}$, for $i, j, k$ distinct indices. Taking $\beta_i = \beta_{ki}$ for any $k \neq i$ in $\left\{2, \ldots, n \right\}$ we get the result.
\end{proof}

We can already prove the main theorem of the paper, that says that algebras of length 1 are characterized by the existence of an special algebra.

\begin{Theorem}\label{main}
Let $\A$ be a unital finite-dimensional $\F$ algebra. Then $l(\A) = 1$ if and only if there is a special basis $\B = \left\{1_{\A}, a_2, \ldots, a_n\right\}$ of $\A$.
\end{Theorem}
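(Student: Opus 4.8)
The plan is to prove both implications of Theorem~\ref{main} by assembling the machinery developed above. The forward direction ($l(\A)=1 \Rightarrow$ existence of a special basis) is essentially immediate: by Lemma~\ref{lemma3.1} the algebra $\A$ admits a canonical basis $\B = \left\{1_{\A}, a_2, \ldots, a_n\right\}$, and by Proposition~\ref{theorem3.5} \emph{any} canonical basis of a length-1 algebra is automatically special. So this half reduces to citing the two previous results.

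The substantive content is the converse: assuming $\A$ possesses a special basis $\B = \left\{1_{\A}, a_2, \ldots, a_n\right\}$ with $a_i^2 = \mu_i 1_{\A}$ and $a_i a_j = \alpha_{ij}1_{\A} + \beta_j a_i - \beta_i a_j$, I must show $l(\A)=1$. By Proposition~\ref{proposition0}, it suffices to verify that $ab \in \F(1_{\A}, a, b)$ for all $a, b \in \A$. The natural approach is to write $a = \sum_i \lambda_i a_i$ and $b = \sum_j \rho_j a_j$ (with $a_1 = 1_{\A}$) and expand $ab$ bilinearly using the special multiplication rules, then show the result lands in the span of $1_{\A}$, $a$, and $b$.

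The key computational step is to collect the expansion of $ab$. First I would handle the diagonal and identity contributions, which are harmless: terms involving $a_1 = 1_{\A}$ contribute multiples of $a$, $b$, and $1_{\A}$, and the terms $\lambda_i \rho_i a_i^2 = \lambda_i \rho_i \mu_i 1_{\A}$ are multiples of $1_{\A}$. The crucial off-diagonal terms are $\sum_{i \neq j}\lambda_i \rho_j\, a_i a_j = \sum_{i \neq j}\lambda_i \rho_j (\alpha_{ij}1_{\A} + \beta_j a_i - \beta_i a_j)$. Setting $\beta := \sum_j \rho_j \beta_j$ and $\beta' := \sum_i \lambda_i \beta_i$ (the ``$\beta$-weighted'' components of $b$ and $a$ respectively), the linear part of this double sum should reorganize into a combination of $\beta \sum_i \lambda_i a_i = \beta\, a$ and $\beta' \sum_j \rho_j a_j = \beta'\, b$, modulo $1_{\A}$-terms and the correction coming from restoring the excluded diagonal indices. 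The point of the antisymmetric shape $\beta_j a_i - \beta_i a_j$ of the structure constants is precisely that it forces this collapse: the coefficient of each basis vector $a_i$ in $ab$ becomes a fixed scalar times $\lambda_i$ plus a fixed scalar times $\rho_i$, so $ab$ is an $\F$-linear combination of $a$, $b$, and $1_{\A}$.

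I expect the main obstacle to be purely bookkeeping: tracking the $1_{\A}$-coefficient correctly and making sure the diagonal indices (where the special formula does not apply, since $a_i^2 = \mu_i 1_{\A}$ rather than $\alpha_{ii}1_{\A}+\beta_i a_i - \beta_i a_i$) are reconciled when one passes between the full sum $\sum_{i,j}$ and the off-diagonal sum $\sum_{i\neq j}$. A clean way to manage this is to add and subtract the diagonal terms so that the antisymmetric identity can be applied to the complete double sum at once; the diagonal corrections $\beta_i \lambda_i a_i$ and $\beta_i \rho_i a_i$ then cancel against the multiples of $a$ and $b$, and the only genuine residue is a scalar multiple of $1_{\A}$. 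Once this reorganization is carried out, membership $ab \in \F(1_{\A}, a, b)$ follows directly, and invoking Proposition~\ref{proposition0} completes the proof.
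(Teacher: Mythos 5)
Your proposal is correct and follows essentially the same route as the paper: the forward direction by citing Lemma~\ref{lemma3.1} and Proposition~\ref{theorem3.5}, and the converse by bilinearly expanding $ab$ and using the antisymmetric form $\beta_j a_i - \beta_i a_j$ of the structure constants to collapse the off-diagonal sum into $\bigl(\sum_j \beta_j \rho_j\bigr)a - \bigl(\sum_i \beta_i \lambda_i\bigr)b$ modulo $\F 1_{\A}$, then invoking Proposition~\ref{proposition0}. The only cosmetic difference is that the paper first strips off the $1_{\A}$-components of $a$ and $b$ before expanding, whereas you keep them in the sum; the bookkeeping is equivalent.
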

\begin{proof}
We already know the existence of a special basis in any algebra of length 1, $\B = \left\{1_{\A}, a_2, \ldots, a_n\right\}$ of $\A$. We will keep the notation of Definition \ref{definition3.6}

Take $a', b'$ arbitrary elements of $\A$, then there exist $\lambda, \mu \in \F$, $a, b \in \F(a_{2}, \ldots, a_{n})$ such that  $a' = \lambda 1_{\A} + a$ and $b' = \mu 1_{\A} + b$. Therefore, $$a' b' = (\lambda 1_{\A} + a)(\mu 1_{\A} + b) = \lambda \mu 1_{\A} + \lambda b + \mu a + ab.$$ In order to prove that $a'b' \in \F(1_{\A}, a', b')$, it is sufficient to prove that $ab \in \F(1_{\A}, a, b)$.

So, let $a = \lambda_{2}a_{2} + \cdots + \lambda_{n}a_{n}$ and $b = \xi_{2}a_{2} + \cdots + \xi_{n}a_{n}$ be arbitrary elements of $\F(a_{2}, \ldots, a_{n})$. Then, there are elements $\mu_1$ and $\mu_2$ $\in \F$ such that
\begin{align*}
ab &=\sum_{i,j=2}^{n}\lambda_{i}\xi_{j}a_{i}a_{j} = \sum_{\substack{i=2 \\ j\neq i}}^n \lambda_{i}\xi_{j}(\beta_{j}a_{i} -\beta_{i}a_{j}) + \mu_1 1_{\A} \\
&= \sum_{j=2}^{n}\sum_{i\neq j}(\beta_{j}a_{i})\lambda_{i}\xi_{j} - \sum_{l=2}^{n}\sum_{h \neq l} (\beta_{l}a_{h})\lambda_{l}\xi_{h} + \mu_1 1_{\A} \\
&= \sum_{j=2}^n \beta_{j}\xi_{j}\left(\sum_{i \neq j} \lambda_{i}a_{i}\right) - \sum_{l=2}^{n} \beta_{l}\lambda_{l}\left(\sum_{h \neq l} \xi_{h}a_{h}\right) + \mu_1 1_{\A} \\
&= \sum_{j=2}^{n}\beta_{j}\xi_{j}(a - \lambda_{j}a_{j}) - \sum_{l=2}^{n}\beta_{l}\lambda_{l}(b - \xi_{l}a_{l}) + \mu_2 1_{\A} \\
&= \sum_{j=2}^{n} \beta_{j}\xi_{j}a - \sum_{j=2}^{n} \beta_{j}\xi_{j}\lambda_{j}a_{j} - \sum_{l=2}^n \beta_{l}\lambda_{l}b + \sum_{l=2}^{n} \beta_{l}\lambda_{l}\xi_{l}a_{l} + \mu_2 1_{\A} \\
&= \left(\sum_{j=2}^{n} \beta_{j}\xi_{j}\right)a - \left(\sum_{j=2}^{n}\beta_{l}\lambda_{l}\right)b + \mu_2 1_{\A} \in \F(1_{\A}, a, b),
\end{align*}
which completes the proof.
\end{proof}

Notice that the previous characterization of unital algebras of length one easily allows the construction of an algorithm to decide if an arbitrary unital finite dimensional $\F$-algebra has length one.  For that we only need to know a basis and the corresponding multiplication table and make use of Lemma \ref{lemma3.1}, Proposition \ref{theorem3.5} and Theorem \ref{main}.

So let us assume, without loss of generality, that we know a basis  $\B=\left\{a_1 = 1_{\A}, a_2, ..., a_n\right\}$ of a unital algebra $\A$ over $\F$ that contains the identity $1_{\A}$.

The algorithm proceeds as follows:

\vspace{0.25cm}

ALGORITHM

\vspace{0.25cm}

{\bf Step 1.}  For any element $a_i \in \B$ check if $a_i^2 \in \F(1_{\A}, a_i)$.

\begin{itemize}
\item[(i)] If it fails for some $2 \leq i \leq n$, then $\A$ has length $> 1$;
\item[(ii)] If it is satisfied for any $2 \leq i \leq n$, then go to Step 2.
\end{itemize}

{\bf Step 2.}  Construct, from $\B$, a canonical basis $\B_c = \left\{b_1 = 1_{\A}, b_2, \ldots ,b_n\right\}$ ($b_i^2 = \mu_i 1_{\A}$)  (we know how to do it by Lemma \ref{lemma3.1}).

\vspace{0.25cm}

{\bf Step 3.}  Check if this basis $\B_c$ is special, that is, if $b_i b_j \in \F(1_{\A}, b_i, b_j)$ and $b_i b_j + b_j b_i \in \F 1_{\A}$ for any $2 \leq i \neq j \leq n$.

\begin{itemize}
\item[(I)] If $\B_c$ is special, then $l(\A) = 1$;
\item[(II)] If $\B_c$ is not special, then $l(\A) >1$.
\end{itemize}

Jordan algebras are an important class of non-associative algebras.  They were introduced by P. Jordan, J. von Neumann and E. Wigner to treat with the formalism of Quantum Mechanics \cite{JNW34}. For results about Jordan algebras we refer readers to \cite{Jac68},  \cite{MC04} and \cite{ZheSSS}.

\begin{Definition}
A Jordan algebra is an algebra  $\A$ over a field $\F$, ${\rm char} \ \F \neq 2$, satisfying the following 2 identities:
\begin{itemize}
\item[(J.1)] $xy = yx$ (commutativity);
\item[(J.2)] $x^2(yx) = (x^2y)x$ (Jordan identity).
\end{itemize}
\end{Definition}

Examples of Jordan algebras include the algebra $\A^+$, obtained from an associative algebra $\A$ changing the original associative product to the new product given by $x.y = \frac{1}{2}(xy + yx)$, and $\A = \F 1_{\A} + \mathcal{V}$, the Jordan algebra of a bilinear form, where $\mathcal{V}$ is a vector space over $\F$ with a symmetric bilinear form $\varphi \colon \mathcal{V} \times \mathcal{V} \rightarrow \F$ and the multiplication in $\A$ is given by
$$(\alpha 1_{\A} + x) (\beta 1_{\A} + y) = (\alpha \beta + \varphi(x,y))1_{\A} + \alpha y + \beta x,$$
for all $\alpha, \beta \in \F$ and $x, y \in \mathcal{V}$.

\begin{Theorem} Let $\A$ be a unital algebra with $l(\A) = 1$ and let $\B = \left\{1_{\A}, a_2, \ldots, a_n\right\}$ be a special basis of $\A$. Then the following conditions are equivalent:
\begin{itemize}
\item[$(1)$] $\A$ is a Jordan algebra;
\item[$(2)$] $\A$ is commutative;
\item[$(3)$] $\alpha_{ij} = \alpha_{ji}$, $\beta_{j} = 0$, for all $i \neq j \in \left\{2, \ldots, n\right\}$.
\end{itemize}
\end{Theorem}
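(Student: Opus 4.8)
The plan is to prove the cyclic chain of implications $(1) \Rightarrow (2) \Rightarrow (3) \Rightarrow (1)$, exploiting the fact that a special basis gives us explicit multiplication formulas $a_i a_j = \alpha_{ij} 1_{\A} + \beta_j a_i - \beta_i a_j$ together with $a_i^2 = \mu_i 1_{\A}$. The implication $(1) \Rightarrow (2)$ is immediate: commutativity is axiom (J.1) in the definition of a Jordan algebra, so every Jordan algebra is by definition commutative and nothing needs to be checked.

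For $(2) \Rightarrow (3)$, I would work directly with the structure constants. Assuming $\A$ is commutative, we have $a_i a_j = a_j a_i$ for all $i, j$. Writing out both sides using the special-basis formula gives $\alpha_{ij} 1_{\A} + \beta_j a_i - \beta_i a_j = \alpha_{ji} 1_{\A} + \beta_i a_j - \beta_j a_i$. Since $\{1_{\A}, a_2, \ldots, a_n\}$ is a basis, I can compare coefficients. Matching the coefficient of $1_{\A}$ yields $\alpha_{ij} = \alpha_{ji}$. Matching the coefficient of $a_i$ (for fixed distinct $i, j$) gives $\beta_j = -\beta_j$, hence $2\beta_j = 0$; since ${\rm char}(\F) \neq 2$, this forces $\beta_j = 0$. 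As $j$ ranges over $\{2, \ldots, n\}$ (with some $i \neq j$ always available once $n \geq 3$; the case $n = 2$ is trivial since there are no cross terms), we conclude $\beta_j = 0$ for all $j$, which is exactly condition $(3)$.

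For $(3) \Rightarrow (1)$, substituting $\beta_j = 0$ into the multiplication rule collapses it to $a_i a_j = \alpha_{ij} 1_{\A}$ with $\alpha_{ij} = \alpha_{ji}$, and $a_i^2 = \mu_i 1_{\A}$. This exhibits $\A$ as the Jordan algebra of a symmetric bilinear form: setting $\mathcal{V} = \F(a_2, \ldots, a_n)$ and defining $\varphi$ on basis vectors by $\varphi(a_i, a_j) = \alpha_{ij}$ (with $\varphi(a_i, a_i) = \mu_i$), the symmetry $\alpha_{ij} = \alpha_{ji}$ guarantees $\varphi$ is a well-defined symmetric bilinear form, and the multiplication in $\A$ matches exactly the formula $(\alpha 1_{\A} + x)(\beta 1_{\A} + y) = (\alpha\beta + \varphi(x,y)) 1_{\A} + \alpha y + \beta x$ recalled before the statement. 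I would then either cite that the Jordan algebra of a bilinear form satisfies (J.1) and (J.2), or verify (J.2) directly: commutativity is clear from $\alpha_{ij} = \alpha_{ji}$, and for the Jordan identity one checks $x^2(yx) = (x^2 y)x$ using that $x^2 \in \F 1_{\A} + \F x$ (which holds by power-associativity, Corollary to Proposition \ref{proposition0}), so $x^2$ commutes and associates with everything, making (J.2) routine.

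The only genuinely delicate point is $(2) \Rightarrow (3)$, and specifically the step extracting $\beta_j = 0$ from $2\beta_j = 0$, which is precisely where the hypothesis ${\rm char}(\F) \neq 2$ is used; I expect this to be the main thing to state carefully. A secondary subtlety is ensuring the coefficient comparison is valid, i.e. that $a_i$ and $a_j$ are distinct basis vectors so their coefficients can be read off independently — this is automatic since $i \neq j$. The construction in $(3) \Rightarrow (1)$ is essentially a verification that the given multiplication table coincides with that of the Jordan algebra of a bilinear form, so once the form $\varphi$ is correctly identified the argument is mechanical.
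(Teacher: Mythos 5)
Your proposal is correct and follows essentially the same route as the paper: $(1)\Rightarrow(2)$ is definitional, $(2)\Rightarrow(3)$ compares coefficients in $a_ia_j=a_ja_i$ (the paper leaves the $2\beta_j=0$ step implicit, which you rightly make explicit using ${\rm char}(\F)\neq 2$), and $(3)\Rightarrow(1)$ identifies $\A$ with the Jordan algebra of the symmetric bilinear form $\varphi(a_i,a_j)=\alpha_{ij}$, $\varphi(a_i,a_i)=\mu_i$ on $\F(a_2,\ldots,a_n)$.
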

\begin{proof}
It is clear that $(1) \Rightarrow (2)$.

$(2) \Rightarrow (3)$. Let $\A$ be commutative. Then, for all $i \neq j \in \left\{2, \ldots, n\right\}$ $a_i a_j = a_j a_i$, that is, $\beta_{j}a_i - \beta_{i} a_j + \alpha_{ij} 1_{\A} = \beta_{i}a_j - \beta_{j}a_i + \alpha_{ji} 1_{\A}$. So $\alpha_{ij} = \alpha_{ji}$ and $\beta_{j} = 0$.

$(3) \Rightarrow (1)$. If $\alpha_{ij} = \alpha_{ji}$ and $\beta_{j} = 0$, for every $i \neq j$, then the product of $\A$ with respect to the special basis $\B = \left\{1_{\A}, a_{2}, \ldots, a_{n}\right\}$ is given by $a_{i}^2  = \mu_i 1_{\A}, a_i a_j = a_j a_i = \alpha_{ij} 1_{\A}.$

Let us denote $\mathcal{V} = \F(a_2, \ldots, a_n)$ and $\left\langle \cdot , \cdot\right\rangle \colon \mathcal{V} \times \mathcal{V} \rightarrow \F$ the symmetric bilinear form on $\mathcal{V}$ given by:
$$\left\langle a_i, a_i\right\rangle = \mu_i, \left\langle a_i, a_j\right\rangle = \left\langle a_j, a_i\right\rangle = \alpha_{ij}, \ {\rm if} \ i \neq j.$$
So $\A = \F 1_{\A} + \mathcal{V}$ is the Jordan algebra of the bilinear form $\left\langle \cdot , \cdot\right\rangle$.
\end{proof}

\begin{Corollary}
Jordan algebras of a symmetric bilinear form are the only unital Jordan algebras of length one.
\end{Corollary}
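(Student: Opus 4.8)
The plan is to prove the two inclusions separately, since the statement asserts that the class of unital Jordan algebras of length one coincides with the class of Jordan algebras of a symmetric bilinear form.

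First I would handle the direction that says these are the \emph{only} ones. Let $\A$ be a unital Jordan algebra with $l(\A)=1$. By Theorem \ref{main}, $\A$ admits a special basis $\B=\{1_\A,a_2,\dots,a_n\}$, so the preceding theorem applies. Since $\A$ is Jordan, condition $(1)$ of that theorem holds, hence so does condition $(3)$: $\alpha_{ij}=\alpha_{ji}$ and $\beta_j=0$ for all distinct $i,j$. Inspecting the proof of the implication $(3)\Rightarrow(1)$, this forces the multiplication table $a_i^2=\mu_i 1_\A$, $a_ia_j=\alpha_{ij}1_\A$, which is exactly the table of the Jordan algebra of the symmetric bilinear form $\langle\cdot,\cdot\rangle$ on $\mathcal V=\F(a_2,\dots,a_n)$ determined by $\langle a_i,a_i\rangle=\mu_i$ and $\langle a_i,a_j\rangle=\alpha_{ij}$. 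Thus $\A$ is a Jordan algebra of a symmetric bilinear form, and this direction is complete.

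For the converse I would verify directly, via Proposition \ref{proposition0}, that every Jordan algebra of a symmetric bilinear form $\A=\F 1_\A+\mathcal V$ (with $\mathcal V\neq 0$, so that $\A\neq\F 1_\A$) has length one. Writing arbitrary elements as $a=\alpha 1_\A+x$ and $b=\beta 1_\A+y$ with $x,y\in\mathcal V$, the defining product gives
\[
ab=(\alpha\beta+\varphi(x,y))1_\A+\alpha y+\beta x.
\]
Since $\beta a+\alpha b=2\alpha\beta 1_\A+\beta x+\alpha y$, I can eliminate $x$ and $y$ to obtain
\[
ab=(\varphi(x,y)-\alpha\beta)1_\A+\beta a+\alpha b\in\F(1_\A,a,b),
\]
so the criterion of Proposition \ref{proposition0} is met and $l(\A)=1$. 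Together with the fact (recorded among the examples) that such an algebra is indeed Jordan, this shows every Jordan algebra of a symmetric bilinear form is a unital Jordan algebra of length one.

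I do not expect a serious obstacle here: both directions reduce to already-established results, the forward one being essentially a reading of condition $(3)$ in the preceding theorem and the converse a one-line application of Proposition \ref{proposition0}. The only point requiring care is the degenerate case $\mathcal V=0$, where $\A=\F 1_\A$ has length $0$ rather than $1$; this is why the correspondence should be understood for nonzero $\mathcal V$.
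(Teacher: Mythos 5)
Your proof is correct and follows the paper's intended route: the corollary is stated there as an immediate consequence of the preceding theorem, whose implication $(3)\Rightarrow(1)$ already exhibits the multiplication table of a Jordan algebra of a bilinear form, and your converse is the expected one-line check via Proposition \ref{proposition0}. Your explicit verification of the converse and the remark about $\mathcal{V}=0$ are sensible additions but do not change the argument.
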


Let us remember that an algebra is called flexible if $x(yx) =(xy)x$ for every $x,y \in \A$.  Clearly associative algebras (resp. commutative algebras) are flexible.

\begin{Theorem}\label{3.12}
Let $\A$ be an algebra of length $1$ and let $\B = \left\{1_{\A}, a_2, \ldots, a_n\right\}$ be an special basis of $\A$. Theorem $\ref{theorem3.5}$.
Then $\A$ is flexible if and only if the following conditions are satisfied:
\begin{align*}\label{identities}
\alpha_{ij} & = \alpha_{ji}, & \beta_{j} \mu_i & = \beta_{i} \alpha_{ij}, & \beta_{h} \alpha_{ij} + \beta_{i}\alpha_{hj} & = 2 \beta_{j} \alpha_{ih}, \tag{F}
\end{align*}
for arbitrary distinct i, j, h $\in \left\{2, \ldots, n\right\}$.
\end{Theorem}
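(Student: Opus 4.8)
The plan is to recast flexibility as a multilinear identity and then test it against the special basis. Since $\chr \F \neq 2$, the flexible law $x(yx) = (xy)x$ is equivalent to its full linearization
$$x(yz) + z(yx) = (xy)z + (zy)x,$$
obtained by the substitution $x \mapsto x+z$ and cancelling the two diagonal copies of the original law; conversely, setting $z = x$ and dividing by $2$ recovers flexibility. Both sides of the linearized identity are trilinear in $x,y,z$, so it holds on all of $\A$ if and only if it holds for every triple of vectors from $\B$. Moreover, whenever one of the three arguments is $1_{\A}$ the identity degenerates to a triviality, so it remains only to verify it on triples $(a_i, a_j, a_k)$ with $i,j,k \in \left\{2, \ldots, n\right\}$, where I can expand everything through the special multiplication rule $a_i a_j = \alpha_{ij} 1_{\A} + \beta_j a_i - \beta_i a_j$ together with $a_i^2 = \mu_i 1_{\A}$.

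First I would dispose of the degenerate cases by splitting on the coincidence pattern of $i,j,k$. When $i=j=k$ the identity reduces, via $a_i^2 = \mu_i 1_{\A}$, to $2\mu_i a_i = 2\mu_i a_i$, which holds since $\A$ is power-associative. The first informative case is $i = j \neq k$, where the linearized identity collapses to $a_i(a_i a_k) = (a_k a_i)a_i$. Expanding both sides and comparing the coefficients in $\B$, the $a_i$-component yields $\alpha_{ik} = \alpha_{ki}$ and the $1_{\A}$-component yields $\beta_k \mu_i = \beta_i \alpha_{ik}$; after renaming $k$ as $j$ these are precisely the first two relations of \textup{(F)}. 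The symmetric case $j = k \neq i$ produces the same two relations once symmetry of the $\alpha$'s is used, so it is redundant.

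The heart of the computation, and the step I expect to be the main obstacle, is the generic case of pairwise distinct $i,j,k$. Here one must expand all four products $a_i(a_j a_k)$, $a_k(a_j a_i)$, $(a_i a_j)a_k$, $(a_k a_j)a_i$ by two applications of the multiplication rule and collect the coefficients of $1_{\A}, a_i, a_j, a_k$ on each side. The $a_i$- and $a_k$-components again force $\alpha_{jk} = \alpha_{kj}$ and $\alpha_{ij} = \alpha_{ji}$; the $a_j$-component is automatically satisfied, both sides contributing $-2\beta_i\beta_k$; and, after substituting the symmetry relations, the $1_{\A}$-component reduces to $\beta_k \alpha_{ij} + \beta_i \alpha_{jk} = 2\beta_j \alpha_{ik}$, which is the third relation of \textup{(F)} with $h = k$. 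Keeping accurate track of the numerous $\beta\beta$-terms, most of which cancel in pairs, is the only genuine difficulty; the rest is bookkeeping.

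Assembling the cases delivers both implications simultaneously. If \textup{(F)} holds, then every basis triple satisfies the linearized identity, so by trilinearity the identity holds on all of $\A$ and $\A$ is flexible; conversely, flexibility forces the linearized identity on every basis triple, and the case analysis shows this is equivalent to \textup{(F)}. I would close by noting that when $n = 3$ there are no triples of distinct indices, so the third relation is vacuous and flexibility is then governed entirely by the first two relations.
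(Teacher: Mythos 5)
Your proposal is correct and follows essentially the same route as the paper: linearize the flexible law, reduce to basis triples by trilinearity, and split on the coincidence pattern of the indices, with the repeated-index cases yielding the first two relations of \textup{(F)} and the generic case of distinct indices yielding the third. The only (harmless) omission is the coincidence pattern $i = k \neq j$, i.e.\ $x = z = a_i$, $y = a_j$, which is just twice the unlinearized flexible law and produces the same two relations $\alpha_{ij} = \alpha_{ji}$ and $\beta_j\mu_i = \beta_i\alpha_{ij}$ as your case $i = j \neq k$.
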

\begin{proof}
If the algebra $\A$ is flexible then using the flexible identity and its linearization we get
\begin{align*}
a_i (a_j a_i) &= (a_i a_j) a_i \\
a_i (a_j a_h) + a_h (a_j a_i) &= (a_i a_j) a_h + (a_h a_j) a_i
\end{align*}
for arbitrary $i, j, h \in \left\{2, \ldots, n\right\}$.

In particular, taking $i, j, h$ distinct we have:
\begin{align*}
a_i (a_j a_i) &= (\beta_i \alpha_{ij} - \beta_j \mu_i) 1_{\A} + (\alpha_{ji} + \beta_i \beta_j) a_i - {\beta_i}^2 a_j \\
(a_i a_j) a_i &= (\beta_j \mu_i - \beta_i \alpha_{ij}) 1_{\A} + (\alpha_{ij} + \beta_i \beta_j) a_i - {\beta_i}^2 a_j \\
a_i (a_j a_h) + a_h (a_j a_i) &= (\beta_j (\alpha_{ih} + \alpha_{hi}) - \beta_i \alpha_{hj} - \beta_h \alpha_{ji}) 1_{\A} + \\
& + (\alpha_{hj} + \beta_h \beta_j) a_i + (\alpha_{ij} + \beta_i \beta_j) a_h - 2 \beta_i \beta_h a_j
\end{align*}
and
\begin{align*}
(a_i a_j) a_h + (a_h a_j) a_i &= (-\beta_j (\alpha_{ih} + \alpha_{hi}) + \beta_h \alpha_{ij} + \beta_i \alpha_{hj}) 1_{\A} + \\
& + (\alpha_{jh} + \beta_h \beta_j) a_i + (\alpha_{ji} + \beta_i \beta_j) a_h - 2 \beta_i \beta_h a_j
\end{align*}

So $\A$ flexible implies:
\begin{align*}
\beta_i \alpha_{ij} - \beta_j \mu_i &= \beta_j \mu_i - \beta_i \alpha_{ji} \\
\alpha_{ji} + \beta_i \beta_j &= \alpha_{ij} + \beta_i \beta_j \\
\beta_j (\alpha_{ih} + \alpha_{hi}) - \beta_i \alpha_{hj} - \beta_{h} \alpha_{ji} &= - \beta_j (\alpha_{ih} + \alpha_{hi}) + \beta_h \alpha_{ij} + \beta_i \alpha_{hj}
\end{align*}
that is $\alpha_{ij} = \alpha_{ji}$, $\beta_{j}\mu_i = \beta_i \alpha_{ji}$ and $\beta_h \alpha_{ij} + \beta_i \alpha_{hj} = 2 \beta_j \alpha_{ih}$ for any distinct $i, j, h \in \left\{2, 3, \ldots, n\right\}$.

Conversely, let us assume that $\A$ has an special basis $\B = \left\{1_{\A} = a_1, a_2, \ldots, a_n \right\}$ with a multiplication table:
$$a_{i}^2 = \mu_i 1_{\A}, \hspace{0.5cm} a_i a_j = \alpha_{ij}1_{\A} + \beta_j a_i - \beta_i a_j$$
satisfying the identities (F). In order to prove that $\A$ is flexible we have to prove that $a_i (a_j a_h) + a_h (a_j a_i) = (a_i a_j) a_h + (a_h a_j) a_i$ for any $i, j, h \in \left\{2, \ldots, n\right\}$.

The proof of the previous implication shows that this is true if $i, j, h$ are distinct or if $i = h \neq j$.

When $i = j = h$ the result is trivial since $a_i.{a_i}^2  = {a_i}^2.a_i = \mu_i a_i$.

If $i = j \neq h$, then $a_i (a_i a_h) + a_h.{a_i}^2 = {a_i}^2.a_h + (a_h a_i) a_i$ since
\begin{align*}
a_h.{a_i}^2 &= {a_i}^2.a_h = \mu_i a_h
\end{align*}
and
\begin{align*}
a_i (a_i a_h) &= (\beta_h \mu_i - \beta_i \alpha_{ih})1_{\A} + (\alpha_{ih} - \beta_i \beta_h)a_i + \beta_{i}^2 a_h \\
              &= (\alpha_{ih} - \beta_i \beta_h)a_i + \beta_{i}^2 a_h \\
(a_h a_i) a_i &= (\beta_i \alpha_{hi} - \beta_h \mu_i)1_{\A} + \beta_{i}^2 a_h + (\alpha_{hi} - \beta_i \beta_h) a_i \\
              &= a_i (a_i a_h)							
\end{align*}
using the identities (F).

The remaining case, $i \neq j = h$ is similar to the previous one.
\end{proof}

Let us notice that the proof of Theorem \ref{3.12} shows that we can take any special basis of $\A$ ($l(\A) = 1)$ to check if $\A$ is flexible.

\begin{Theorem}\label{theorem3.9}
Let $\A$ be an $\F$-algebra with $1_{\A}$ and $l(\A) = 1$. Let $\B = \left\{1_{\A} = a_1, a_2, \ldots, a_n\right\}$ be an special basis of $\A$. Then $\A$ is associative if and only if the following conditions are satisfied:
\begin{itemize}
\item[{\rm (A1)}] $\mu_i = \beta_{i}^2$, for every $i \in \left\{2, \ldots, n\right\}$;
\item[{\rm (A2)}] $\alpha_{ij} = \beta_{j} \beta_{i} = \alpha_{ji}$, for all $i \neq j \in \left\{2, \ldots, n\right\}$.
\end{itemize}
\end{Theorem}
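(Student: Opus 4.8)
The plan is to read off both implications directly from the multiplication table of a special basis, namely $a_i^2 = \mu_i 1_{\A}$ and $a_i a_j = \alpha_{ij} 1_{\A} + \beta_j a_i - \beta_i a_j$ for distinct $i,j$, exactly as was done for flexibility in Theorem~\ref{3.12}. The forward implication should be much shorter than the flexibility computation, because a single associator per pair suffices, and the converse becomes almost immediate after one well-chosen change of basis.

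For the forward implication I would assume $\A$ associative and, for distinct $i,j \in \{2,\ldots,n\}$, exploit only the relation $(a_i a_i) a_j = a_i (a_i a_j)$. The left-hand side is $\mu_i a_j$, while expanding the right-hand side with $a_i^2 = \mu_i 1_{\A}$ and the special-basis formula gives
\[
a_i(a_i a_j) = (\beta_j \mu_i - \beta_i \alpha_{ij}) 1_{\A} + (\alpha_{ij} - \beta_i \beta_j) a_i + \beta_i^2 a_j .
\]
Since $\{1_{\A}, a_i, a_j\}$ is linearly independent, comparing coefficients forces $\mu_i = \beta_i^2$ (this is (A1)), $\alpha_{ij} = \beta_i \beta_j$, and the now-redundant identity $\beta_j \mu_i = \beta_i \alpha_{ij}$. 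Interchanging $i$ and $j$ yields $\alpha_{ji} = \beta_j \beta_i = \alpha_{ij}$, which completes (A2). So this direction is just the single associator $[a_i, a_i, a_j]$.

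For the converse I would assume (A1) and (A2) and linearize the product by the substitution $c_i = a_i - \beta_i 1_{\A}$, $2 \le i \le n$, which is a triangular change of basis, so $\{1_{\A}, c_2, \ldots, c_n\}$ is again a basis. Substituting $\mu_i = \beta_i^2$ and $\alpha_{ij} = \beta_i \beta_j$ into the table, a short computation gives the uniform relation $c_i c_j = -2\beta_i c_j$ for all $i,j \in \{2,\ldots,n\}$, the diagonal case reading $c_i^2 = -2\beta_i c_i$. Setting $V = \F(c_2, \ldots, c_n)$ and $\lambda \colon V \to \F$ the linear functional with $\lambda(c_i) = -2\beta_i$, bilinearity upgrades this to $xy = \lambda(x)\, y$ for all $x,y \in V$. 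Associativity then follows at once: the associator is trilinear and vanishes as soon as one argument equals $1_{\A}$, so it suffices to check $(xy)z = x(yz)$ for $x,y,z \in V$, where both sides equal $\lambda(x)\lambda(y)\, z$.

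The only genuine obstacle is spotting the substitution $c_i = a_i - \beta_i 1_{\A}$ in the converse; without it one is forced into a case-by-case associator computation over the coincidence patterns of $i,j,h$, in the laborious style of Theorem~\ref{3.12}, whereas the change of basis collapses everything to the trivially associative rule $xy = \lambda(x)y$. The standing hypothesis $\chr \F \neq 2$ enters only to make the special basis available through Lemma~\ref{lemma3.1}; the factor $2$ appearing in $c_i c_j = -2\beta_i c_j$ plays no essential role.
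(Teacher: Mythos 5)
Your proposal is correct. The forward implication is essentially the paper's: the authors also extract (A1) and (A2) from the single associator $a_i(a_ia_j)=(a_ia_i)a_j$ (done there for $i=2,j=3$ and then transported to the general case via the $3$-dimensional subalgebras $\F(1_{\A},a_i,a_j)$), and your coefficient comparison is identical to theirs. The converse, however, is genuinely different and cleaner. The paper verifies associativity on basis triples: for three distinct indices it expands $(a_ia_j)a_k$ and $a_i(a_ja_k)$ and checks that both equal $\alpha_{jk}a_i-\alpha_{ik}a_j+\alpha_{ij}a_k$, while the degenerate index patterns are delegated to the $3$-dimensional case, which in turn leans on the flexibility computation of Theorem~\ref{3.12}. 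Your substitution $c_i=a_i-\beta_i 1_{\A}$ replaces all of this with one structural observation: under (A1)--(A2) one gets $c_ic_j=-2\beta_ic_j$ for all $i,j$ (diagonal included), hence $xy=\lambda(x)y$ on $V=\F(c_2,\ldots,c_n)$ with $\lambda(c_i)=-2\beta_i$, and associativity follows from trilinearity of the associator plus the trivial identity $(xy)z=\lambda(x)\lambda(y)z=x(yz)$. What your route buys, beyond brevity and independence from Theorem~\ref{3.12}, is an explicit structural description of the associative length-one algebras as $\F 1_{\A}\oplus V$ with product governed by a linear functional, which connects directly to types (4) and (5) of Theorem~\ref{markova}; what the paper's route buys is uniformity with the flexibility and Jordan computations carried out in the same section. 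Two cosmetic caveats: both arguments implicitly assume $n\geq 3$ (for $n=2$ there are no $\beta_i$'s and the statement is vacuous, as in the paper), and your closing remark about characteristic $2$ is fine but moot here, since the existence of a special basis is already a hypothesis of the theorem.
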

\begin{proof}
Let us assume that $\A$ is associative. By simplicity, we will consider first the case ${\rm dim} \ \A = 3$. Then $\A$ is associative if and only if $a_i (a_j a_k) = (a_i a_j) a_k$ for every $\left\{i, j, k\right\} \subseteq \left\{2, 3\right\}$. By symmetry we only need to consider the identities: $a_2(a_2a_3) = a_2^2 a_3$ and $a_2(a_3a_2) = (a_2a_3)a_2$. But  $a_{2}^2 a_3 = \mu_2 a_3$ and
\begin{align*}
a_2 (a_2 a_3) &= a_2 (\alpha_{23} 1_{\A} + \beta_{3} a_2 - \beta_{2}a_3) \\
&= \alpha_{23}a_2 + \beta_{3}\mu_2 1_{\A} - \beta_{2}(\alpha_{23}1_{\A} + \beta_{3}a_2-\beta_{2}a_3) \\
&= (\beta_{3}\mu_2 - \beta_{2}\alpha_{23})1_{\A} + (\alpha_{23} - \beta_{2}\beta_{3})a_2 + \beta_{2}^{2}a_{3}.
\end{align*}

So associativity of $\A$ implies $0 = \beta_{3}\mu_2 - \beta_{2}\alpha_{23}$, $\alpha_{23} = \beta_{2} \beta_{3}$, $\mu_2 = \beta_{2}^{2}$, and the first identity follows from the other two: $\beta_{3}\mu_2 = \beta_{3} \beta_{2}^2 = \alpha_{23}\beta_{2}$. Consequently associativity of $\A$ implies (by symmetry) $\alpha_{23} = \alpha_{32} = \beta_{2} \beta_{3}$ and $\mu_2 = \beta_{2}^2$, $\mu_{3} = \beta_{3}^2$.

Conversely, if the previous identities are satisfied then $a_2 (a_2 a_3) = a_2^2 a_3$ and $a_3 (a_3 a_2) = a_3^{2}a_2$. We only need to check that $a_2 (a_3 a_2) = (a_2 a_3) a_2$ and $a_3 (a_2 a_3) = (a_3 a_2)a_3$. This follows immediately from previous result, since our conditions guarantee that the algebra $\A$ is flexible.

So conditions (A1) + (A2) imply associativity of $\A$ when ${\rm dim} \ \A = 3$.

Now let us consider the general case. If $\A$ is associative, ${\rm dim} \ \A = n$, $l(\A) = 1$ and $\left\{1_{\A}, a_2, \ldots, a_n\right\}$ is an special basis of $\A$, then for all $i \neq j \in \left\{2, \ldots, n\right\}$ the algebra $\F(1_{\A}, a_i, a_j)$ is 3-dimensional and associative. As we have just seen, this implies that $\alpha_{ij} = \alpha_{ji} = \beta_{j} \beta_{i}$ and $\mu_i = \beta_{i}^2$.

Conversely, let us assume that $\alpha_{ij} = \alpha_{ji} = \beta_{j} \beta_{i}$, for all $i \neq j \in \left\{2, \ldots, n\right\}$, and $\mu_i = \beta_{i}^2$ for any $i \in \left\{2, \ldots, n\right\}$. So for any $i \neq j$ in $\left\{2, \ldots, n\right\}$ we already know, using the 3-dimensional case studied above, that $a_i(a_k a_j) = (a_i a_k) a_j$ and $a_j (a_k a_i) = (a_j a_k) a_i$, when $k=i$ or $j$. Now suppose that $i, j, k$ are 3 distinct elements in $\left\{2, \ldots, n\right\}$. We need to prove that $(a_i a_j) a_k = a_i (a_j a_k)$ in order to get associativity.

\begin{align*}
(a_i a_j) a_k& = (\alpha_{ij}1_{\A} + \beta_{j} a_i - \beta_{i} a_j) a_k \\
&=\alpha_{ij} a_k + \beta_{j} (\alpha_{ik} 1_{\A} + \beta_{k} a_i - \beta_{i}a_k) - \beta_{i} (\alpha_{jk}1_{\A} + \beta_{k}a_j - \beta_{j}a_k) \\
&=(\beta_{j} \alpha_{ik} - \beta_{i} \alpha_{jk}) 1_{\A} + \beta_{j} \beta_{k} a_i - \beta_{i} \beta_{k} a_j + (\alpha_{ij} - \beta_{i}\beta_{j} + \beta_{i}\beta_{j})a_k \\
&= (\beta_{j} \beta_{i} \beta_{k} - \beta_{i} \beta_{j} \beta_{k}) 1_{\A} + \alpha_{jk} a_i - \alpha_{ik}a_j + \alpha_{ij}a_k \\
&= \alpha_{jk} a_i - \alpha_{ik}a_j + \alpha_{ij}a_k.
\end{align*}

In the same way
$$a_i (a_j a_k) = \alpha_{jk} a_i - \alpha_{ik}a_j + \alpha_{ij}a_k.$$
So the algebra $\A$ is associative.
\end{proof}

\begin{Remark}
Theorem \ref{theorem3.9} allows to get examples of unital algebras of length 1 that are not associative. For instance, the algebra $\F(1_{\A}, a, b)$ with the product given by $a^2 = 0$, $b^2 = b$, $ab = 2(1_{\A}) + a + b$, $ba = -a - b$ is non-associative and has length equal to 1.
\end{Remark}

\section{Characteristic 2}

In this section we will address the case of characteristic 2.  So $\F$ will denote an extension of the field of two elements $\F_{2}$. We will study first, in a separate way, the case of dimension 3 that behaves in a different way.

In what follows,  given $a, b$ arbitrary elements of $\A$, we denote by $a \equiv b \ ({\rm mod} \ 1_{\A})$ (or simply by $a \equiv b$) if and only if $a - b \in \F 1_{\A}$.

\begin{Lemma}\label{lemma4.1}
Let $\A$ be a unital $3$-dimensional $\F$-algebra and $\left\{b_1 = 1_{\A}, b_2, b_3\right\}$ an arbitrary basis of $\A$. Then $l(\A) = 1$ implies that the multiplication table of $\A$ satisfies the following conditions:
\begin{align*}
b_{2}^2  &\equiv \delta_{2} b_{2}, & b_{3}^2 &\equiv \delta_{3} b_3,   \\
b_{2} b_{3} &\equiv \beta_2 b_2 + \beta_{3}^{\ast} b_3,  & b_3 b_2 \equiv &\beta_{3} b_{3} + \beta_{2}^{\ast} b_{2}, \tag{C2}
\end{align*}
where $\delta_2, \beta_2, \beta_2^{\ast}, \delta_3, \beta_3, \beta_3^{\ast} \in \F$, and $\beta_2 + \beta_2^{\ast} + \delta_2 = \beta_3 + \beta_3^{\ast} + \delta_3.$
\end{Lemma}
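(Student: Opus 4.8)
The plan is to extract all four congruences in (C2) from Corollary \ref{lemma0} together with the basis property, and then to obtain the single scalar identity by testing the length-one condition on one well-chosen element. The only genuinely nontrivial output is the relation $\beta_2 + \beta_2^* + \delta_2 = \beta_3 + \beta_3^* + \delta_3$; everything else is essentially bookkeeping.

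First I would read off the diagonal congruences directly from Corollary \ref{lemma0}: since $l(\A)=1$, we have $b_2^2 \in \F(1_\A, b_2)$ and $b_3^2 \in \F(1_\A, b_3)$, and writing out these memberships and absorbing the $1_\A$-terms into $\equiv$ produces scalars $\delta_2, \delta_3 \in \F$ with $b_2^2 \equiv \delta_2 b_2$ and $b_3^2 \equiv \delta_3 b_3$. The two mixed products require nothing more than the fact that $\{1_\A, b_2, b_3\}$ is a basis: expanding $b_2 b_3$ and $b_3 b_2$ in this basis and again discarding the $1_\A$-component gives $b_2 b_3 \equiv \beta_2 b_2 + \beta_3^* b_3$ and $b_3 b_2 \equiv \beta_3 b_3 + \beta_2^* b_2$ for suitable $\beta_2, \beta_2^*, \beta_3, \beta_3^* \in \F$. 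This establishes every congruence in (C2), so only the final linear relation remains.

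For that relation I would apply Corollary \ref{lemma0} to the single element $a = b_2 + b_3$. Because $\{1_\A, b_2, b_3\}$ is linearly independent we have $a \notin \F 1_\A$, hence $\{1_\A, a\}$ is independent and Corollary \ref{lemma0} forces $a^2 \in \F(1_\A, a)$; that is, modulo $\F 1_\A$ the element $a^2$ is a scalar multiple of $b_2 + b_3$. Expanding $a^2 = b_2^2 + b_2 b_3 + b_3 b_2 + b_3^2$ using the table just obtained, the coefficient of $b_2$ equals $\delta_2 + \beta_2 + \beta_2^*$ while the coefficient of $b_3$ equals $\delta_3 + \beta_3^* + \beta_3$. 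Since $a^2$ lies in $\F(1_\A, b_2 + b_3)$, these two coefficients must coincide, which is exactly $\beta_2 + \beta_2^* + \delta_2 = \beta_3 + \beta_3^* + \delta_3$.

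I do not expect a serious technical obstacle; the conceptual point—and the reason characteristic $2$ is treated separately—is that, in contrast to the argument of Lemma \ref{3.2}, one cannot let a parameter $\lambda$ range over three or more field values to force individual coefficients to vanish (over $\F_2$ only $\lambda \in \{0,1\}$ is available). Consequently the single evaluation at $a = b_2 + b_3$ yields just one linear constraint among the structure constants rather than the stronger antisymmetry available in odd characteristic, and this weaker constraint is precisely what (C2) records.
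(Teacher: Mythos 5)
Your proof is correct and follows essentially the same route as the paper: read off the diagonal and mixed-product congruences (the paper cites Proposition~\ref{proposition0} for the latter, though as you note they are automatic in dimension $3$ since $\{1_{\A},b_2,b_3\}$ spans $\A$), then apply Corollary~\ref{lemma0} to $b_2+b_3$ and equate the $b_2$- and $b_3$-coefficients to obtain $\beta_2+\beta_2^{\ast}+\delta_2=\beta_3+\beta_3^{\ast}+\delta_3$. No gaps.
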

\begin{proof}
Suppose that  $l(\A) = 1$.  By Proposition \ref{proposition0}, there exist $ \beta_{2}, \beta_{3}^{*}, \beta_{3}, \beta_{2}^{*} \in \F$ such that  $b_{2}b_{3} \equiv  \beta_{2}b_{2} + \beta_{3}^{*}b_{3}$ and $b_{3}b_{2} \equiv  \beta_{3}b_{3} + \beta_{2}^{*}b_{2}$.

By Corollary \ref{lemma0}, there exist $\delta_{2}, \delta_{3} \in \F$ such that $b_{2}^{2} \equiv \delta_{2}b_{2}$, $b_{3}^{2} \equiv \delta_{3}b_{3}$.

Corollary \ref{lemma0} also gives that $$(b_{2} + b_{3})^{2} = b_{2}^{2} + b_{2}b_{3} + b_{3}b_{2} + b_{3}^{2} \equiv \delta_{2}b_{2}  + \beta_{2}b_{2} + \beta_{3}^{*}b_{3} + \beta_{3}b_{3} + \beta_{2}^{*}b_{2} + \delta_{3}b_{3}$$ $\in \F_{2}(1_{\A}, b_{2} + b_{3}),$
which implies $\beta_{3}^{*} + \beta_{3} + \delta_{3} = \beta_{2} + \beta_{2}^{*} + \delta_{2}.$
\end{proof}

\begin{Theorem}\label{theorem4.2}
Let $\A$ be a unital $\F_{2}$-algebra of dimension $3$.  Then $l(\A) = 1$ if and only if there is a basis $\B = \left\{a_{1} = 1_{\A}, a_{2}, a_{3}\right\}$ whose multiplication table satisfies one of the following 3 conditions:
\begin{itemize}
\item[{\rm 1.}] $a_2^2 \equiv 0$, $a_3^2 \equiv 0$, $a_2 a_3 \equiv 0$, $a_3 a_2 \equiv 0$;
\item[{\rm 2.}] $a_2^2 \equiv a_2$, $a_3^2 \equiv a_3$, $a_2 a_3 \equiv 0$, $a_3 a_2 \equiv 0$;
\item[{\rm 3.}] $a_2^2 \equiv 0$, $a_3^2 \equiv a_3$, $a_2 a_3 \equiv 0$, $a_3 a_2 \equiv a_3$.
\end{itemize}
\end{Theorem}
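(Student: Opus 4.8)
The plan is to run both implications through the squaring criterion and then reduce everything to a single invariant living on the quotient $\A/\F 1_\A$. By Proposition \ref{proposition0}, $l(\A)=1$ is equivalent to $xy\in\F(1_\A,x,y)$ for all $x,y$, and since $\dim\A=3$ this reduces (exactly as in the dimension-$3$ argument of Section 3: an independent triple $\{1_\A,x,y\}$ is already a basis, while a dependent one forces $y\in\F(1_\A,x)$) to the single requirement that $x^2\in\F(1_\A,x)$ for every $x$. In characteristic $2$ this requirement is rigid: since $(x+c1_\A)^2=x^2+c^2 1_\A$, the class of $x^2$ modulo $\F 1_\A$ depends only on the class of $x$, so squaring descends to a well-defined map $\bar Q\colon\bar\A\to\bar\A$ on $\bar\A:=\A/\F 1_\A$, and $l(\A)=1$ becomes precisely the condition $\bar Q(\bar x)\in\F\bar x$ for all $\bar x$. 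As $\bar\A$ has exactly three nonzero vectors, I record $\varepsilon(v)\in\{0,1\}$ by $\bar Q(v)=\varepsilon(v)\,v$ and set $k=\#\{v\neq 0:\varepsilon(v)=1\}$, which is an isomorphism invariant.

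For the ``if'' direction I would take each of the three tabulated tables and check on the classes $\bar a_2,\bar a_3,\bar a_2+\bar a_3$ that $\bar Q(v)\in\F v$ (for instance in Case 3 one has $\overline{a_2^2}=0$, $\overline{a_3^2}=\bar a_3$ and $\overline{(a_2+a_3)^2}=\overline{a_3a_2+a_3^2}=0$), whence $l(\A)=1$ by Proposition \ref{proposition0}. The same short computations fix the invariant: Case 1 gives $k=0$, Case 2 gives $k=3$, and Case 3 gives $k=1$.

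For the ``only if'' direction I would start from Lemma \ref{lemma4.1}, whose concluding identity $\beta_2+\beta_2^*+\delta_2=\beta_3+\beta_3^*+\delta_3$ is exactly the statement that $\bar Q(\bar a_2+\bar a_3)\in\F(\bar a_2+\bar a_3)$, so $\bar Q$ preserves all three lines and $k$ is defined. The basis changes fixing $1_\A$ form the affine group $\bar\A^{*}\rtimes \mathrm{GL}(\bar\A)$: a translation $a_i\mapsto a_i+c_i1_\A$ leaves $\bar Q$ unchanged (its cross term $2c_i(\cdots)$ vanishes in characteristic $2$), while $\mathrm{GL}(\bar\A)=\mathrm{GL}_2(\F_2)\cong S_3$ realizes every permutation of the three nonzero classes, so $k$ is a complete invariant of $\bar Q$. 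After permuting to a standard $\varepsilon$ one computes $C(\bar a_2,\bar a_3):=\overline{a_2a_3+a_3a_2}$ from $\bar Q$, and since a translation adds an arbitrary vector to $\overline{a_2a_3}$ while preserving $C$, one can normalize $\overline{a_2a_3}=0$; this pins the individual off-diagonal products down and matches $k=0,1,3$ with Cases 1, 3, 2 respectively.

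The step I expect to be the genuine obstacle is the value $k=2$, which Lemma \ref{lemma4.1} does not exclude. Carrying out the same normalization for a weight-two $\varepsilon$ produces the table $a_2^2\equiv a_2,\ a_3^2\equiv 0,\ a_2a_3\equiv a_3,\ a_3a_2\equiv 0$, which is the upper-triangular algebra $T_2(\F_2)$; this is type $(5)$ of Theorem \ref{markova} and therefore genuinely has length $1$. Since its invariant $k=2$ differs from the values $0,1,3$ attained by the three listed tables, $T_2(\F_2)$ admits no basis realizing any of the three conditions, so the stated list is not exhaustive. Thus the honest outcome of this plan is that the theorem requires a fourth case (the $k=2$ table above); barring an extra hypothesis that forbids $k=2$, the argument cannot close exactly as the three-case statement demands, and reconciling this discrepancy is where the real work lies.
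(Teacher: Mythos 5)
Your analysis is correct, and the discrepancy you flag is real: the theorem as printed is false, and the error traces to a concrete omission in the paper's own proof. The paper splits the ``only if'' direction into three cases according to squares; its Case 3 (some $b_2\not\equiv 0$ with $b_2^2\equiv 0$ and some $b_3\not\equiv 0$ with $b_3^2\equiv b_3$) yields, via Lemma \ref{lemma4.1}, the relation $\beta_2+\beta_2^{\ast}=\beta_3+\beta_3^{\ast}+1$, and the argument then proceeds under the hypothesis ``If $\beta_2+\beta_2^{\ast}=0$'' and never returns to the alternative $\beta_2+\beta_2^{\ast}=1$. That dropped subcase is exactly your $k=2$: there $(b_2+b_3)^2\equiv b_2+b_3$, so two of the three nonzero classes of $\A/\F 1_{\A}$ are idempotent modulo $1_{\A}$. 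Your invariant is sound --- in characteristic $2$ the class of $x^2$ modulo $\F 1_{\A}$ depends only on the class of $x$, so $k$ is intrinsic to the algebra and independent of any basis --- and the three listed tables force $k=0,3,1$ respectively, so no change of basis can absorb $k=2$ into the list. The algebra $T_2(\F_2)$ of upper triangular $2\times 2$ matrices realizes $k=2$ and has length $1$, either by Cayley--Hamilton ($a^2=\tr(a)\,a+\det(a)1_{\A}$ gives $a^2\in\F(1_{\A},a)$, which in dimension $3$ suffices by Proposition \ref{proposition0}) or because it is the $3$-dimensional algebra of type $(5)$ in Theorem \ref{markova}. Comparison with Theorem \ref{markova} already signals the problem: the associative type-$(5)$ algebra over $\F_2$ must appear in any complete $3$-dimensional list, and it matches none of the three conditions, whereas the paper's own Remark accounts only for the type-$(3)$ algebra $\F_2\oplus\F_2\oplus\F_2$.

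Methodologically your proposal is the same computation as the paper's (Lemma \ref{lemma4.1} plus a case split on squares), but reorganized around the descended squaring map on $\A/\F 1_{\A}$ and the invariant $k$; it is precisely this reorganization that makes exhaustiveness of the case split checkable and exposes the missing value $k=2$. Your ``if'' direction and the reduction of $l(\A)=1$ to $x^2\in\F(1_{\A},x)$ in dimension $3$ agree with the paper. So the honest conclusion is the one you reach: the statement needs a fourth condition, namely $a_2^2\equiv a_2$, $a_3^2\equiv 0$, $a_2a_3\equiv a_3$, $a_3a_2\equiv 0$ (or its opposite table), and with that case added your normalization argument does close and proves the corrected theorem.
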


\begin{proof}
Let $\left\{1_{\A} = b_1, b_2, b_3\right\}$ be an arbitrary basis of $\A$. If $l(\A) = 1$, then $b^2 \in \F(1_{\A}, b)$ for any $b \in \A$. So $b^2 \equiv 0$ or $b^2 \equiv b$. (Notice that $1_{\A}^2 \equiv 0$ and $1_{\A}^2 \equiv 1_{\A}$ and it is the only nonzero element that satisfies both conditions). We will distinguish 3 cases.

{\bf Case 1}: For every $a \in \A$, $a^2 \equiv 0$.

Using Lemma \ref{lemma4.1} we know that there are elements $\beta_2$, $\beta_2^{\ast}$, $\beta_3$, $\beta_3^{\ast}$ $\in \F_2$ (notice that $\delta_2 = \delta_3 = 0)$ such that $b_2 b_3 \equiv \beta_2 b_2 + \beta_2^{\ast} b_3$, $b_3 b_2 \equiv \beta_3 b_3 + \beta_2^{\ast} b_2$ and $\beta_2 + \beta_2^{\ast} = \beta_3 + \beta_3^{\ast}$. Then $b_2 b_3 + b_3 b_2 \equiv (\beta_3 + \beta_3^{\ast})b_3 + (\beta_2 + \beta_2^{\ast})b_2 = \beta(b_2 + b_3)$, where $\beta = \beta_2 + \beta_2^{\ast} = \beta_3 + \beta_3^{\ast}$. But $(b_2 + b_3)^2 \equiv 0$ by our assumption, so $\beta = 0$. Then $\beta_2 = \beta_2^\ast$, $\beta_3  = \beta_3^{\ast}$ and so $b_2 b_3 \equiv \beta_2 b_2 + \beta_3 b_3  \equiv b_3 b_2$ and $b_2 b_3 + b_3 b_2  \equiv 0$.

Taking the basis $\B_1 = \left\{a_1 = 1_{\A}, a_2 = \beta_3 1_{\A} + b_2, a_3  = \beta_2 1_{\A} + b_3\right\}$ the multiplication table satisfies $a_2^2 \equiv 0$, $a_3^2 \equiv 0$, $a_2 a_3 \equiv \beta_3 b_3 + \beta_2 b_2 + b_2 b_3  = 0$ and $a_3 a_2 \equiv 0$.

{\bf Case 2}. For every $a \not\equiv 0$, $a^2 \equiv a$.

Again Lemma \ref{lemma4.1} implies that there are elements $\beta_2, \beta_2^{\ast}, \beta_3, \beta_3^{\ast} \in \F_2$ (notice that $\delta_2 = \delta_3 = 1)$ such that $\beta_2 + \beta_2^{\ast} = \beta_3 + \beta_3^{\ast}$, $b_2^2 \equiv b_2$, $b_3^2 \equiv b_3$, $b_2 b_3 \equiv \beta_2 b_2 + \beta_3^{\ast} b_3,$ $b_3 b_2 \equiv \beta_3 b_3 + \beta_2^{\ast} b_2$.

So $b_2 b_3 + b_3 b_2 \equiv (\beta_2 + \beta_2^{\ast}) b_2 + (\beta_3 + \beta_3^\ast) b_3 = \beta (b_2 + b_3)$, where $\beta = \beta_2 + \beta_2^{\ast} = \beta_3 + \beta_3^{\ast}$. But our assumption implies that $(b_2 + b_3)^2 \equiv b_2 + b_3$ and $(b_2 + b_3)^2 = b_2^2 + b_2 b_3 + b_3 b_2 + b_3^2 \equiv b_2 + b_3 + \beta (b_2 + b_3)$.

So $\beta = 0$ and $\beta_2  = \beta_2^{\ast}$, $\beta_3 = \beta_3^{\ast}$.

That is, the multiplication table is $b_2 b_3 \equiv \beta_2 b_2 + \beta_3 b_3 \equiv b_3 b_2$.

Take $a_2 = \beta_3 1_{\A} + b_2,$ $a_3 = \beta_2 1_{\A} + b_3$ as before. Then $\B_2 = \left\{a_1 = 1_{\A}, a_2, a_3\right\}$ is a basis of $\A$, $a_2^2 \equiv \beta_{3} b_2 + \beta_3 b_2 + b_2^2 \equiv b_2 \equiv a_2$, $a_3^2 \equiv a_3$, $a_2 a_3 = a_3 a_2 \equiv \beta_3 b_3 + \beta_2 b_2 + b_2 b_3 \equiv 0$.

{\bf Case 3.} There are elements $b_2 \not\equiv 0$, $b_3 \not\equiv 0$ such that $b_2^2 \equiv 0$, $b_3^2 \equiv b_3$. Then $\left\{1_{\A}, b_2, b_3\right\}$ is a basis of $\A$. Again, using Lemma \ref{lemma4.1} we know that there are elements $\beta_2, \beta_2^{\ast}, \beta_3, \beta_3^{\ast} \in \F_{2}$ $(\delta_2 = 0 , \delta_3 = 1)$ such that $b_2 b_3 \equiv \beta_2 b_2 +  \beta_3^{\ast} b_3$, $b_3 b_2 = \beta_3 b_3 + \beta_2^{\ast} b_2$, $\beta_2 + \beta_2^{\ast} = \beta_3 + \beta_3^{\ast} + 1$. If $\beta_2 + \beta_2^{\ast}  = 0$, then $\beta_2 = \beta_2^{\ast}$, $\beta_3 = \beta_3^{\ast} + 1$. So $b_2 b_3 + b_3 b_2  = \beta_2 b_2 + (\beta_3 + 1) b_3 + \beta_3 b_3 + \beta_2 b_2 = b_3$.

Taking $a_2 = (\beta_3 + 1)1_{\A} + b_2$, $a_3 = \beta_2 1_{\A} + b_3$, $\B_3 = \left\{a_1 = 1_{\A}, a_2, a_3 \right\}$ is a basis of $\A$, $a_2^2 \equiv b_2^2 \equiv 0$, $a_3^2 \equiv b_3^2 \equiv b_3 \equiv a_3$,
$$a_2 a_3 = (\beta_3 + 1)b_3 + \beta_2 b_2 + \beta_2 b_2 + (\beta_3 + 1) b_3 \equiv 0,$$
$$a_3 a_2 \equiv \beta_2 b_2 + (\beta_3 + 1) b_3 + \beta_3 b_3 + \beta_2 b_2 = b_3 \equiv a_3.$$

Clearly algebras satisfying two different conditions are not isomorphic.

For the converse notice that $\F_{2}$-algebras satisfying one of the 3 conditions above satisfy also ${\rm (C2)}$.

So it suffices to prove that if an algebra $\A$ over $\F_2$ has a basis that satisfies condition ${\rm (C2)}$ then the algebra has length 1.

Let $\left\{a_1 = 1_{\A}, a_2, a_3\right\}$ be a basis of $\A$ with the indicated multiplication table. In order to prove that $l(\A) = 1$ we only need to prove that for arbitrary elements  $a = \lambda_{2}a_{2}+\lambda_{3}a_{3}$, $b= \mu_{2}a_{2} + \mu_{3}a_{3}$ in $\F_{2}(a_{2}, a_{3})$, the product $ab \in \F_{2}(1_{\A}, a, b)$.

The result is clear if at least one of the elements ($a$ or $b$) is equal to $0$.  So we can assume that $a,b \in \{a_2, a_3, a_2 + a_3\}$.

Again the result is clear if $a = b$ since we have proved in the first implication that  $(a_{2} + a_{3})^{2} \in \F_{2}(1_{\A}, a_{2} + a_{3})$ if and only if
 $\beta_{3}^{*} + \beta_{3} + \delta_{3} = \beta_{2} + \beta_{2}^{*} + \delta_{2}.$

The case $a = a_2$, $b = a_3$ is given directly by the conditions and the case $a = a_2$, $b = a_2 + a_3$ (similarly $a = a_3$, $b = a_2 + a_3$) is clear, since   $$ ab = a_2(a_2 + a_3) \equiv \delta_2 a_2 + \beta_{2}a_2 + \beta_{3}^{*} a_3 = ( \delta_{2} + \beta_{2} + \beta_{3}^{*})a_2 + \beta_{3}^{*}(a_2 + a_3),$$
that is,  $  a_2(a_2 + a_3) \in \F_{2}(1_{\A}, a_2, a_2 + a_3)$.
\end {proof}

\begin{Remark}
Notice that the algebra $\F_2 \oplus \F_2 \oplus \F_2$ that appears mentioned in Theorem \ref{markova} as one associative algebra of length 1 appears here as one of the algebras of type 2.
\end{Remark}

\begin{Theorem}\label{theorem4.3}
Let $\F$ be a proper extension of $\F_2$ and  $\A$ a unital $\F$-algebra of dimension $3$. Then $l(\A) = 1$ if and only if there is a basis $\B = \left\{a_{1} = 1_{\A}, a_{2}, a_{3}\right\}$ of $\A$ whose multiplication table satisfies the following conditions:
\begin{itemize}
\item[{\rm (i)}] $a_{2}^2  \equiv \delta_{2} a_{2}$,  $a_{3}^2 \equiv \delta_{3} a_3$, $\delta_2, \delta_3 \in \F_2$;
\item[{\rm (ii)}] $a_{2} a_{3} \equiv \beta_2 a_2 + \beta_{3}^{\ast} a_3$,   $a_3 a_2 \equiv \beta_{3} a_{3} + \beta_{2}^{\ast} a_{2}$, $\beta_2$, $\beta_2^{\ast}$, $\beta_3$, $\beta_3^{\ast} \in \F$ and $\beta_2 + \beta_2^{\ast} + \delta_3 = 0 = \beta_3 + \beta_3^{\ast} + \delta_2$.
\end{itemize}
\end{Theorem}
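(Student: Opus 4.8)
The plan is to prove both implications by reducing, as in the three-dimensional characteristic-$2$ setting of Lemma \ref{lemma4.1}, to the two criteria furnished by Corollary \ref{lemma0} and Proposition \ref{proposition0}, and then to exploit that a proper extension of $\F_2$ has at least $4$ elements. Throughout, recall $\mathrm{char}\,\F=2$, so in particular $-\gamma=\gamma$ for every $\gamma\in\F$.

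For the forward direction I would assume $l(\A)=1$ and begin with an arbitrary basis $\{1_\A,b_2,b_3\}$. By Corollary \ref{lemma0} one may write $b_2^2\equiv\delta_2 b_2$ and $b_3^2\equiv\delta_3 b_3$ with $\delta_2,\delta_3\in\F$. The first move is a rescaling: replacing $b_i$ by $c_ib_i$ sends $\delta_i$ to $c_i\delta_i$ (and adding a scalar to $b_i$ leaves $\delta_i$ unchanged, since $(b_i+\gamma 1_\A)^2\equiv b_i^2$), so choosing $c_i=\delta_i^{-1}$ when $\delta_i\neq0$ and $c_i=1$ otherwise yields a basis $\{1_\A,a_2,a_3\}$ with $\delta_2,\delta_3\in\{0,1\}=\F_2$. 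This is condition (i). By Proposition \ref{proposition0} this basis also satisfies $a_2a_3\equiv\beta_2a_2+\beta_3^{\ast}a_3$ and $a_3a_2\equiv\beta_3a_3+\beta_2^{\ast}a_2$ for suitable scalars.

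The \emph{key step}, and the place where $|\F|\geq 4$ is essential, is to sharpen the single identity of Lemma \ref{lemma4.1} into the two separate relations of (ii). For every $\lambda\in\F$, Corollary \ref{lemma0} gives $(a_2+\lambda a_3)^2\in\F(1_\A,a_2+\lambda a_3)$; expanding yields
\[
(a_2+\lambda a_3)^2\equiv\bigl(\delta_2+\lambda(\beta_2+\beta_2^{\ast})\bigr)a_2+\bigl(\lambda(\beta_3+\beta_3^{\ast})+\lambda^2\delta_3\bigr)a_3 .
\]
Demanding that the coefficient of $a_3$ be $\lambda$ times the coefficient of $a_2$ forces $(\delta_3+\beta_2+\beta_2^{\ast})\lambda^2+(\delta_2+\beta_3+\beta_3^{\ast})\lambda=0$ for all $\lambda\in\F$. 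This polynomial has degree at most $2$ but at least $|\F|\geq 4$ roots, so both of its coefficients vanish, giving $\beta_2+\beta_2^{\ast}+\delta_3=0$ and $\beta_3+\beta_3^{\ast}+\delta_2=0$, which is condition (ii).

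For the converse, I would assume a basis $\{1_\A,a_2,a_3\}$ satisfies (i) and (ii); by Proposition \ref{proposition0} it suffices to verify $ab\in\F(1_\A,a,b)$ for all $a,b\in\A$. Writing $a'=\lambda_1 1_\A+a$ and $b'=\mu_1 1_\A+b$ with $a,b\in\F(a_2,a_3)$, one reduces exactly as in Theorem \ref{main} to the case $a,b\in\F(a_2,a_3)$. If $a,b$ are linearly independent they span the two-dimensional space $\F(a_2,a_3)$, so $\{1_\A,a,b\}$ is a basis of $\A$ and $ab\in\A=\F(1_\A,a,b)$ trivially. Otherwise one of $a,b$ is a scalar multiple of the other (or is zero), and it suffices to show $a^2\in\F(1_\A,a)$; for $a=\lambda_2a_2+\lambda_3a_3$ a direct expansion using (i) together with $\beta_2+\beta_2^{\ast}=\delta_3$ and $\beta_3+\beta_3^{\ast}=\delta_2$ gives $a^2\equiv(\lambda_2\delta_2+\lambda_3\delta_3)a$, which lies in $\F(1_\A,a)$. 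Hence $l(\A)=1$.

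The main obstacle I anticipate is precisely the sharpening in the key step: over $\F_2$ only $\lambda\in\{0,1\}$ is available, which is exactly why Theorem \ref{theorem4.2} must split into three cases, whereas the larger field delivers the uniform relations (ii). The remaining points—legitimacy of the rescaling and the bookkeeping in the converse—are routine once this is in hand.
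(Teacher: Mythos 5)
Your proposal is correct and follows essentially the same route as the paper: rescale to force $\delta_i\in\{0,1\}$, use $(a_2+\lambda a_3)^2\in\F(1_\A,a_2+\lambda a_3)$ for all $\lambda$ and the fact that a degree-$2$ polynomial cannot vanish on $|\F|\geq 4$ points to extract the two relations in (ii), and reduce the converse via Proposition \ref{proposition0} to checking $a^2\in\F(1_\A,a)$. The only (cosmetic) difference is that your closed form $a^2\equiv(\lambda_2\delta_2+\lambda_3\delta_3)a$ handles in one stroke the two cases ($\delta_2=\delta_3$ and $\delta_2\neq\delta_3$) that the paper treats separately.
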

\begin{proof}
Assume that $l(\A) = 1$. Take $\left\{b_{1} = 1_{\A}, b_{2}, b_{3}\right\}$  a basis of $\A$. By Corollary~\ref{lemma0} we know that $b_{i}^{2} \equiv \gamma_{i}b_{i}$, $i = 2, 3$, for some $\gamma_{i} \in \F$.  If $\gamma_{i} \neq 0$, taking $a_i = \gamma_{i}^{-1}b_{i}$, we get a basis $\B = \left\{1_{\A}, a_{2}, a_{3}\right\}$ of $\A$ satisfying that $a_{2}^{2} \equiv \delta_{2}a_{2}$ and $a_{3}^{2} \equiv \delta_{3}a_{3}$, $\delta_i =0$ or $1$, that is, $\delta_{2}, \delta_{3} \in \F_{2}$.

Furthermore,  $a_{2}a_{3} \equiv \beta_{2}a_{2} + \beta_{3}^{*}a_{3}$,  and $a_{3}a_{2} \equiv \beta_{3}a_{3} + \beta_{2}^{*}a_{2}$ for some $\beta_{2}, \beta_{3}, \beta_{2}^{*}, \beta_{3}^{*} \in \F$, by Proposition \ref{proposition0}.

Corollary \ref{lemma0} implies that, for all $\lambda \in \F$,
$$(a_{2} + \lambda a_{3})^{2} \equiv \delta_{2}a_{2} + \lambda(\beta_{2}a_{2}+\beta_{3}^{*}a_{3}+ \beta_{3}a_{3}+\beta_{2}^{*}a_{2}) + \lambda^{2} \delta_{3}a_{3} \in \F(1_{\A}, a_{2} + \lambda a_{3}).$$

This implies that  $\lambda(\delta_{2} +\lambda\beta_{2} +  \lambda \beta_{2}^{*}) = \lambda \beta_{3}^{*} + \lambda \beta_{3} + \lambda^{2} \delta_{3} $, that is,  $\lambda (\beta_{3}^{*} + \beta_{3} + \delta_{2}) = \lambda^{2}(\beta_{2}+\beta_{2}^{*}+\delta_{3}).$

Since $| \F| \geq 4$, we can conclude that $\beta_{3}^{*} + \beta_{3} + \delta_{2} = 0 = \beta_{2}+\beta_{2}^{*}+\delta_{3}$.

Conversely, let  us assume the existence of a basis whose multiplication table satisfies the conditions (i) and (ii), as described in the theorem.  By Proposition \ref{proposition0} we only need to prove that for arbitrary elements $a, b$  of $\A$,  $ab \in \F(1_{\A}, a, b)$.  Since this claim is obviously true when $\{1_{\A}, a, b\}$ is a basis, we only have to consider the case in which the 3 elements are linearly dependent, what reduces to prove that, for an arbitrary element $a \in \A$ the element $a^2 \in \F(1_{\A}, a)$.  Since every element  $a \in \A$ can be expressed as $a = \mu 1_{\A} + a'$, where $a' \in \F(a_{2},a_{3})$, and  $a^2 = \mu^2 1_{\A} + a'^2$, the claim is true for any $a \in \F(1_{\A}, a_2, a_3)$ if and only if it is true for any $a' \in \F(a_2, a_3)$. So, let us assume that $a = \lambda_{2}a_{2}+\lambda_{3}a_{3}$. First of all notice that  $a_{2}a_{3} + a_{3}a_{2} \equiv \delta_{3}a_{2} + \delta_{2}a_{3}$. Then
$$a^2 = \lambda_{2}^2 a_{2}^2 + \lambda_{2}\lambda_{3}(a_{2} a_{3} + a_{3} a_{2})  + \lambda_{3}^2 a_{3}^2 \equiv \lambda_{2}^2 \delta_{2}a_{2} + \lambda_{2}\lambda_{3}(\delta_{2} a_{3} + \delta_{3} a_{2}) +
\lambda_{3}^2 \delta_3 a_{3}.$$

If $\delta_2 = \delta_3 = \delta$, we get $a^2 \equiv ( \lambda_{2}^2 + \lambda_{2}\lambda_{3}) \delta a_{2} + (\lambda_{2}\lambda_{3} +
\lambda_{3}^2) \delta a_{3},$ that is, $$a^2 \equiv (\lambda_{2} + \lambda_{3}) \delta (\lambda_{2} a_{2} + \lambda_{3}a_{3}),$$ what  proves the result in this case.

When  $\delta_2 \neq \delta_3$, we can assume, without loss of generality,  that $\delta_2 = 1$ and $\delta_3 = 0$.  So
$a^2  \equiv \lambda_{2}^2 a_{2} + \lambda_{2}\lambda_{3} a_{3} = \lambda_{2} a,$
what finishes the proof of the theorem.
\end{proof}

\begin{Corollary}\label{corollary4.5}
Let $\F$ be a proper extension of $\F_2$ and $\A$ a unital $\F$-algebra of dimension {\rm 3}. Then $l(\A) = 1$ if and only if there is a basis  $\B^{\ast} = \left\{a_1^{\ast} = 1_{\A}, a_2^{\ast}, a_3^{\ast} \right\}$ whose multiplication table satisfies one of the following 3 conditions:
\begin{itemize}
\item[{\rm 1.}] ${a_2^\ast}^2 \equiv 0, {a_3^\ast}^2 \equiv 0, a_2^\ast  a_3^\ast \equiv 0$, $a_3^\ast a_2^\ast \equiv 0$;
\item[{\rm 2.}] ${a_2^\ast}^2 \equiv a_2^\ast$, ${a_{3}^\ast}^2 \equiv a_3^\ast$, $a_2^\ast a_3^\ast \equiv a_3^\ast, a_3^\ast a_2^\ast \equiv a_2^\ast$;
\item[{\rm 3.}] ${a_2^\ast}^2 \equiv 0, {a_3^\ast}^2 \equiv a_3^\ast$, $a_2^\ast a_3^\ast \equiv 0$, $a_3^\ast a_2^\ast \equiv a_3^\ast$.
\end{itemize}
\end{Corollary}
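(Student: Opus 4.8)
The plan is to read the three tables off Theorem~\ref{theorem4.3} after a change of basis by scalars. I would begin with a basis $\{1_\A,a_2,a_3\}$ as produced by Theorem~\ref{theorem4.3}, so that conditions (i)--(ii) hold and in particular $\delta_2,\delta_3\in\F_2$. Interchanging $a_2$ and $a_3$ costs nothing, so up to this symmetry the pair $(\delta_2,\delta_3)$ is one of $(0,0)$, $(1,1)$, $(0,1)$, and these will furnish items 1, 2 and 3 respectively. The one consequence of (ii) that I would isolate is that, since $\chr\F=2$, the constraints $\beta_2+\beta_2^\ast+\delta_3=0=\beta_3+\beta_3^\ast+\delta_2$ say exactly that $\beta_2+\beta_2^\ast=\delta_3$ and $\beta_3+\beta_3^\ast=\delta_2$; hence the symmetric part is rigid,
\[ a_2a_3+a_3a_2\equiv\delta_3\,a_2+\delta_2\,a_3. \]

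For the forward implication I would normalize the one-sided products by replacing $a_2,a_3$ with $a_2^\ast=a_2+s1_\A$ and $a_3^\ast=a_3+t1_\A$, $s,t\in\F$. Such a scalar translation preserves every square modulo $\F 1_\A$, hence keeps the relations $(a_i^\ast)^2\equiv\delta_i a_i^\ast$ and the rigid identity above intact, while shifting $a_2^\ast a_3^\ast\equiv a_2a_3+t\,a_2+s\,a_3$; as $(s,t)$ ranges over $\F^2$ this product runs over all of $\F a_2+\F a_3$ modulo $\F 1_\A$. I would therefore pick $s,t$ so that $a_2^\ast a_3^\ast$ attains the value prescribed by the corresponding item (namely $0$ in items 1 and 3, and $a_3^\ast$ in item 2); once $a_2^\ast a_3^\ast$ is fixed, the rigid identity determines $a_3^\ast a_2^\ast$ as the complementary summand of $\delta_3 a_2^\ast+\delta_2 a_3^\ast$, and one reads off the tables of items 1, 2 and 3.

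For the converse, each listed table is a concrete instance of (i)--(ii): one reads off $\delta_2,\delta_3$ and the coefficients $\beta_2,\beta_2^\ast,\beta_3,\beta_3^\ast$ and checks the two numerical constraints of Theorem~\ref{theorem4.3}, whereupon that theorem gives $l(\A)=1$; the three tables are moreover separated by the number of indices $i$ with $(a_i^\ast)^2\equiv a_i^\ast$. The step I expect to be most delicate is the forward normalization, precisely because the algebra need not be commutative, so the two one-sided products must be tracked separately. The key point---and the reason the present list differs from the $\F_2$ list of Theorem~\ref{theorem4.2}---is that over a proper extension the two constraints force the symmetric part $\delta_3 a_2^\ast+\delta_2 a_3^\ast$ to be nonzero whenever some $\delta_i\neq0$; hence in items 2 and 3 the one-sided products cannot both be annihilated, and the translation parameters $s,t$ must be selected with corresponding care.
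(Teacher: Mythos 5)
Your strategy is the same as the paper's: feed Theorem \ref{theorem4.3} into a case split on $(\delta_2,\delta_3)\in\{(0,0),(1,1),(0,1)\}$ and then translate $a_i\mapsto a_i+c_i1_{\A}$ to normalize the products; your ``rigid identity'' $a_2a_3+a_3a_2\equiv\delta_3a_2+\delta_2a_3$ is correct and is exactly the invariant the paper uses implicitly, and your observations that such translations preserve the squares and the symmetric part while sweeping $a_2^\ast a_3^\ast$ through all of $\F a_2+\F a_3$ modulo $\F 1_{\A}$ are all sound.

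The gap is that the two steps you leave as ``one reads off the tables'' and ``one checks the two numerical constraints'' do not come out as claimed for item 3, and your own rigid identity is what shows it. In the case $(\delta_2,\delta_3)=(0,1)$ the symmetric part is $\delta_3a_2^\ast+\delta_2a_3^\ast=a_2^\ast$, so after normalizing $a_2^\ast a_3^\ast\equiv 0$ you are forced to $a_3^\ast a_2^\ast\equiv a_2^\ast$ --- not $a_3^\ast a_2^\ast\equiv a_3^\ast$ as item 3 states --- and no relabelling repairs this, since the rigid identity makes the symmetric part the square-zero basis vector whereas item 3 makes it the idempotent one. Correspondingly, the converse check you promise fails for item 3 as printed: reading off its coefficients gives $\beta_2+\beta_2^\ast+\delta_3=0+0+1\neq 0$, so Theorem \ref{theorem4.3} does not apply; in fact that table yields length $>1$, since for $\omega\in\F\setminus\F_2$ one computes $(\omega a_2^\ast+a_3^\ast)^2\equiv(\omega+1)a_3^\ast\notin\F(1_{\A},\omega a_2^\ast+a_3^\ast)$, contradicting Corollary \ref{lemma0}. (The paper's own Case 3 contains the matching slip: from $\beta_2^\ast=\beta_2+1$ and $\beta_3^\ast=\beta_3$ one gets $a_3a_2\equiv(\beta_2+1)a_2+\beta_3a_3$, not $\beta_2a_2+(\beta_3+1)a_3$.) So your argument, carried out honestly, proves the corollary with item 3 amended to $a_3^\ast a_2^\ast\equiv a_2^\ast$; as written, the proposal asserts verifications that, once actually performed, contradict the statement it claims to prove, and you should either make that correction explicit or flag the discrepancy.
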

\begin{proof}
By Theorem \ref{theorem4.3} we know that if $\A$ is an algebra of length 1 then there is a basis $\B = \left\{a_1 = 1_{\A}, a_2, a_3\right\}$ with a multiplication table given by
\begin{align*}
a_{2}^2  &\equiv \delta_{2} a_{2}, & a_{3}^2 &\equiv \delta_{3} a_3   \\
a_{2} a_{3} &\equiv \beta_2 a_2 + \beta_{3}^{\ast} a_3,  & a_3 a_2 \equiv &\beta_{3} a_{3} + \beta_{2}^{\ast} a_{2},
\end{align*}
where $\delta_2, \beta_2, \beta_2^{\ast}, \delta_3, \beta_3, \beta_3^{\ast} \in \F$, and $\beta_2 + \beta_2^{\ast} + \delta_3 = 0 = \beta_3 + \beta_3^{\ast} + \delta_2.$

We will distinguish 3 cases, when both $\delta_2$ and $\delta_3$ are either 0 or 1 and when one of them is 0 and the other is 1.

{\bf Case 1.} If $\delta_2 = \delta_3 = 0$, $b_2^2 \equiv 0$, $b_3^2 \equiv 0$, $\beta_2 + \beta_2^{\ast} = 0 = \beta_3 + \beta_3^{\ast}$, then $\beta_2 = \beta_2^{\ast}$, $\beta_3 = \beta_3^{\ast}$, then $a_2 a_3 \equiv \beta_2 a_2 + \beta_3 a_3 \equiv a_3 a_2$, then $a_2 a_3 + a_3 a_2 \equiv 0$.

Taking $a_2^\ast  = \beta_3 1_{\A} + a_2$, $a_3^\ast = \beta_2 1_{\A} + a_3$, the basis $\B_1^\ast = \left\{a_{1}^\ast = 1_{\A}, a_2^\ast, a_3^\ast \right\}$ satisfies condition 1.

{\bf Case 2.} If $\delta_2 = \delta_3 = 1$, $a_2^2 \equiv a_2$, $a_3^2 \equiv a_3$, $\beta_2^{\ast}  = \beta_2 + 1$, $\beta_3^{\ast} = \beta_{3} + 1$, then $a_2 a_3  \equiv \beta_2 a_2 + (\beta_3 + 1) a_3$, $a_3 a_2 \equiv (\beta_2 + 1) a_2 + \beta_3 a_3$.

Consider the basis $\B_2^\ast = \left\{a_1^\ast = 1_{\A}, a_2^\ast = a_2 + \beta_3 1_{\A}, a_3^\ast  = a_3 + \beta_2 1_{\A}\right\}$. It satisfies condition 2.

{\bf Case 3.} If $\delta_2 = 0, \delta_3 = 1$, then $\beta_2^{\ast} = \beta_2 + 1,$ $\beta_3^{\ast} = \beta_3$. So $a_2^2 \equiv 0$, $a_3^2 \equiv 1$, $a_2 a_3 = \beta_2 a_2 + \beta_3 a_3$, $a_3 a_2  = \beta_2 a_2 + (\beta_3 + 1) a_3$. Take $a_2^\ast = a_2 + \beta_3 1_{\A}$ and $a_3^\ast = a_3 + \beta_2 1_{\A}$. Then the basis $\B_3^\ast = \left\{a_1^\ast = 1_{\A}, a_2^\ast, a_3^\ast \right\}$ satisfies condition 3.
\end{proof}

Finally we will address the case of dimension greater than 3.

\begin{Theorem}\label{4.6}
Let $\F$ be a field of characteristic {\rm 2} and $\A$ an $\F$-algebra with ${\rm dim} \ \A \geq 4$. Then $l(\A) = 1$ if and only if there is a basis $\B^\ast = \left\{a_{1}^\ast = 1_{\A}, a_{2}^\ast, \ldots, a_{n}^\ast\right\}$ of $\A$ whose multiplication table satisfies one of the following two conditions:
\begin{itemize}
\item[{\rm (I)}] ${a_{i}^\ast}^{2} \equiv 0$, for $i = 2, \ldots, n$, $a_{i}^\ast a_{j}^\ast \equiv a_{j}^\ast a_{i}^\ast \equiv \beta_{ij}a_{i}^\ast + \beta_{ji}a_{j}^\ast$,
\item[{\rm (II)}] ${a_{i}^\ast}^{2} \equiv a_{i}^\ast$, for $i = 2, \ldots, n$, $a_{i}^\ast a_{j}^\ast \equiv \beta_{ij}a_{i}^\ast + (1+\beta_{ji})a_{j}^\ast$.
\end{itemize}
Notice that in the last case $a_{i}^\ast a_{j}^\ast + a_{j}^\ast a_{i}^\ast \equiv a_{i}^\ast + a_{j}^\ast$.
\end{Theorem}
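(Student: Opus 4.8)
The plan is to prove both directions directly from Proposition~\ref{proposition0} and Corollary~\ref{lemma0}, handling $\F_2$ and its proper extensions simultaneously, instead of gluing together the three-dimensional lists. Throughout, $\equiv$ denotes equality modulo $\F 1_\A$ and I start from an arbitrary basis $\{1_\A,b_2,\dots,b_n\}$. Assume first $l(\A)=1$. By Corollary~\ref{lemma0} each $b_i^2\equiv\delta_i b_i$ with $\delta_i\in\F$, and by Proposition~\ref{proposition0} each product is $b_ib_j\equiv L_{ij}b_i+R_{ij}b_j$, no further basis vector occurring. The first step is to exploit bilinearity: for pairwise distinct $i,j,k$ (which exist since $n\ge4$) the element $b_i(b_j+b_k)=b_ib_j+b_ib_k$ must lie in $\F(1_\A,b_i,b_j+b_k)$, forcing $R_{ij}=R_{ik}$, and symmetrically $(b_j+b_k)b_i$ forces $L_{ji}=L_{ki}$. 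Hence $R_{ij}$ depends only on the left index and $L_{ij}$ only on the right one, so I may write $b_ib_j\equiv\ell_j b_i+\rho_i b_j$.

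The main step is to expand an arbitrary product. For $a=\sum_i\lambda_i b_i$ and $b=\sum_j\mu_j b_j$ a direct computation, using $\chr\F=2$ to cancel the doubled terms, gives $ab\equiv Pa+Rb+\sum_i\lambda_i\mu_i c_i\,b_i$, where $P=\sum_j\mu_j\ell_j$, $R=\sum_i\lambda_i\rho_i$ and $c_i:=\delta_i+\ell_i+\rho_i$. Since $Pa+Rb\in\F(1_\A,a,b)$ always, $l(\A)=1$ is equivalent to $\sum_i\lambda_i\mu_i c_i b_i\in\F(1_\A,a,b)$ for all $\lambda,\mu$. I then claim every $c_m=0$: were $c_m\ne0$, choosing $a=b_m+b_p$ and $b=b_m+b_q$ with $m,p,q$ distinct (possible precisely because $n\ge4$) isolates the offending sum as $c_m b_m$, and one checks directly that $b_m\notin\F(1_\A,b_m+b_p,b_m+b_q)$, a contradiction. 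This vanishing is the heart of the theorem and the unique place where $\dim\A\ge4$ is indispensable; in dimension three the $c_i$ may be nonzero, which is exactly the asymmetric type occurring in Theorem~\ref{theorem4.2} and Corollary~\ref{corollary4.5}.

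With $c_i=0$ I get $\rho_i=\ell_i+\delta_i$, so $b_ib_j\equiv\ell_j b_i+(\ell_i+\delta_i)b_j$ and, for every $x=\sum_i\lambda_i b_i$, $x^2\equiv\phi(x)\,x$ where $\phi(x)=\sum_i\lambda_i\delta_i$ is an intrinsic linear functional on $\F(b_2,\dots,b_n)$. I now split on $\phi$. If $\phi=0$, then all $\delta_i=0$ and $b_ib_j\equiv\ell_j b_i+\ell_i b_j$ is symmetric, which is condition~(I) with $a_i^\ast=b_i$. If $\phi\ne0$, I change basis of $\F(b_2,\dots,b_n)$ to one lying in the affine hyperplane $\phi^{-1}(1)$: fixing $v_0$ with $\phi(v_0)=1$ and a basis $w_2,\dots,w_{n-1}$ of $\ker\phi$, the vectors $v_0,v_0+w_2,\dots,v_0+w_{n-1}$ are linearly independent and all have $\phi$-value $1$. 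In this basis every $\delta_i^\ast=1$, whence $\rho_i^\ast=\ell_i^\ast+1$ and the table takes the form~(II); the stated identity $a_i^\ast a_j^\ast+a_j^\ast a_i^\ast\equiv a_i^\ast+a_j^\ast$ is immediate.

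For the converse, if $\B^\ast$ satisfies (I) or (II) then the table reads $a_i^\ast a_j^\ast\equiv\ell_j a_i^\ast+(\ell_i+\delta)a_j^\ast$ with a uniform $\delta\in\{0,1\}$, so all $c_i=0$ and the expansion above yields $ab\equiv Pa+Rb\in\F(1_\A,a,b)$ for $a,b\in\F(a_2^\ast,\dots,a_n^\ast)$; writing a general pair as $a=\lambda1_\A+a'$, $b=\mu1_\A+b'$ and noting $\F(1_\A,a',b')=\F(1_\A,a,b)$ extends this to all of $\A$, and Proposition~\ref{proposition0} gives $l(\A)=1$. The one genuinely delicate point is the vanishing of the constants $c_i$, where both the characteristic and the hypothesis $\dim\A\ge4$ are used in an essential way.
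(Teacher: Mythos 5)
Your proof is correct, and its skeleton coincides with the paper's: both arguments linearize products of sums of basis vectors to show that the two coefficients in $b_ib_j\equiv L_{ij}b_i+R_{ij}b_j$ each depend on a single index, then extract the relation $\delta_i=\ell_i+\rho_i$ (your $c_i=0$; the paper's $\beta_i+\beta_i^{*}=\delta_i$ from expanding $(b_i+b_j)(b_i+b_k)$), and finally normalize so that all $\delta_i$ coincide. You differ in two ways that are worth noting. First, you package everything into the single expansion $ab\equiv Pa+Rb+\sum_i\lambda_i\mu_i c_i b_i$ and reuse it verbatim for the converse; the paper instead performs separate ad hoc computations for the forward constraints and for the converse, so your version is more economical and makes transparent exactly which obstruction ($c_i\neq 0$) distinguishes dimension $3$ from dimension $\geq 4$. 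Second, your normalization uses the intrinsic linear functional $\phi$ with $x^2\equiv\phi(x)x$ and passes to a basis inside the affine hyperplane $\phi^{-1}(1)$, which simultaneously absorbs the paper's preliminary rescaling $b_i\mapsto\gamma_i^{-1}b_i$ (to force $\delta_i\in\{0,1\}$) and its reordering-and-translation step $b_s\mapsto b_s+b_{r+1}$; this is cleaner and more conceptual, though it accomplishes the same thing. One small inaccuracy: the vanishing of the $c_i$ is not the \emph{unique} place where $\dim\A\geq 4$ is indispensable --- the earlier step forcing $R_{ij}=R_{ik}$ and $L_{ji}=L_{ki}$ already requires three distinct indices in $\{2,\dots,n\}$, hence $n\geq 4$ as well. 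This does not affect the validity of the argument.
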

\begin{proof}
Assume $l(\A) = 1$ and consider $\left\{1_{\A}, b_{2}, \ldots, b_{n}\right\}$ a basis of $\A$. Then $b_{i}^{2} \equiv \delta_{i}b_{i}$, for some $\delta_{i} \in \F$, $2 \leq i \leq n$, by Corollary \ref{lemma0}.

 Arguing as in Theorem \ref{theorem4.3}, we can assume that $\delta_{i} \in \left\{0, 1\right\}$, for all $2 \leq i \leq n$.

Furthermore, by Proposition \ref{proposition0},  $b_{i}b_{j} \in \F(1_{\A}, b_{i}, b_{j})$, for all $2 \leq i \neq j \leq n$.  So $b_{i}b_{j} \equiv \beta_{ij}b_{i} + \beta_{ji}^{*}b_{j}$, for some $\beta_{ij}, \beta_{ji}^{*} \in \F$. Using again Proposition \ref{proposition0}, we know that for all distinct $i, j, k \in \left\{2, \ldots, n\right\}$, the following conditions hold:

\begin{itemize}
\item[{\rm (i)}] $b_{i}(b_{j}+b_{k}) \equiv \beta_{ij}b_{i} + \beta_{ji}^{*} b_{j} + \beta_{ik}b_{i} + \beta_{ki}^{*}b_{k} \in \F(1_{\A}, b_{i}, b_{j} + b_{k})$, which implies $ \beta_{ji}^{*} = \beta_{ki}^{*}$. Let us define $\beta_i^{*} = \beta_{ji}^{*}$ for any $j \neq i \in \left\{2, \ldots, n\right\}$;
\item[{\rm (ii)}] $(b_{j} + b_{k})b_{i} \equiv \beta_{ji}b_{j} + \beta_{ij}^{*} b_{i} + \beta_{ki}b_{k} + \beta_{ik}^{*}b_{i} \in \F(1_{\A}, b_{j}+b_{k},b_{i})$, which implies $\beta_{ji} = \beta_{ki}$. Let us define $\beta_{i} = \beta_{ji}$ for any $j \neq i \in \left\{2, \ldots, n\right\}$;
\item[{\rm (iii)}] $(b_{i} + b_{j})(b_{i} + b_{k}) \equiv \delta_{i}b_{i} + \beta_{k}b_{i} + \beta_{i}^{*}b_{k} + \beta_{i}b_{j} + \beta_{j}^{*}b_{i} + \beta_{k}b_{j} + \beta_{j}^{*}b_{k} \in \F(1_{\A}, b_{i} + b_{j}, b_{i} + b_{k})$ which implies $\delta_{i} + \beta_{k} + \beta_{j}^{*} = \beta_{i} + \beta_{k} + \beta_{i}^{*} + \beta_{j}^{*}$. Then, by (i) and (ii), we can conclude that $\beta_{i} + \beta_{i}^{*} = \delta_{i}$.
\end{itemize}

Therefore, $b_{i}b_{j} \equiv \beta_{j} b_{i} + (\beta_{i} + \delta_{i})b_{j}$ and $b_{j}b_{i} \equiv \beta_{i} b_{j} + (\beta_{j} + \delta_{j})b_{i}$, for all $i, j \in \left\{2, \ldots, n\right\}$ such that $i \neq j$.

{\bf Claim.} We can assume that $\delta_{2} = \cdots = \delta_{n}$.

Indeed, if either all scalars are 0 or all scalars are 1, there is nothing to prove. Suppose that $r-1$ scalars are 0 and $n-r$ are 1. Without loss of generality (reordering elements), we can assume that $\delta_{2} = \cdots = \delta_{r} = 0$ and $\delta_{r+1} = \cdots = \delta_{n} = 1$.

 Observe that
$$(b_{s} + b_{r+1})^{2}  \equiv \delta_{s}b_{s} + \beta_{r+1}b_{s} + \beta_{s}^{*} b_{r+1} + \beta_{s} b_{r+1} + \beta_{r+1}^{*}b_{s} + \delta_{r+1}b_{r+1}$$
$$= (\beta_{r+1} + \beta_{r+1}^{*})b_{s} + (\beta_{s} + \beta_{s}^{*} + 1)b_{r+1} = \delta_{r+1}b_{s} + (\delta_{s}+1)b_{r+1} = b_{s} + b_{r+1},$$
for all $2 \leq s \leq r$.

Thus, $\B = \left\{1_{\A}, a_{2} = b_{2} + b_{r+1}, \ldots, a_{r} = b_{r} + b_{r+1}, a_{r+1} = b_{r+1}, \ldots, a_{n} = b_{n} \right\}$ is a basis of $\A$ such that $a_{i}^{2} \equiv a_{i}$.

 In any case, we can consider a basis $\B^{\ast} = \left\{a_{1}^\ast = 1_{\A},a_2^\ast, \ldots, a_{n}^\ast\right\}$ of $\A$ such that ${a_{i}^\ast}^{2} \equiv \delta a_{i}^\ast$, for all $2 \leq i \leq n$, where $\delta \in \left\{0, 1\right\}$.  Clearly, $\delta = 0$ gives statement (1), and  $\delta = 1$ gives statement (2).

Conversely, suppose that there is a basis $\B^{\ast} = \left\{a_{1}^\ast = 1_{\A}, a_{2}^\ast, \ldots, a_{n}^\ast\right\}$ that satisfies either (I) or (II).  Let $a = \lambda_1 1_{\A} + \cdots + \lambda_n {a_n}^\ast$  and $b = \gamma_1 1_{\A} + \cdots + \gamma_n a_n^\ast$ be arbitrary elements of $\A$.  Then

{\bf Case 1:} If ${a_{i}^\ast}^{2} \equiv 0$, for $i = 2, \ldots, n$, $a_{i}^\ast a_{j}^\ast \equiv a_{j}^\ast a_{i}^\ast \equiv \beta_{j}a_{i}^\ast + \beta_{i}a_{j}^\ast$,
for all $i \neq j = 2, \ldots, n$, then
$$ab \equiv (\gamma_{2}\beta_{2} + \cdots + \gamma_{n}\beta_{n})a + (\lambda_{2}\beta_{2} + \cdots + \lambda_{n}\beta_{n})b \in \F(1_{\A}, a, b).$$

{\bf Case 2:} If ${a_{i}^\ast}^{2} \equiv a_{i}^\ast$, for $i = 2, \ldots, n$, $a_{i}^\ast a_{j}^\ast \equiv \beta_{j}a_{i} + (1+\beta_{i})a_{j}$, for all $i \neq j = 2, \ldots, n$, then
$$ab \equiv (\gamma_{2}\beta_{2} +  \cdots + \gamma_{n}\beta_{n})a + (\lambda_{2} + \cdots + \lambda_{n} + \lambda_{2}\beta_{2} + \cdots + \lambda_{n}\beta_{n})b \in \F(1_{\A}, a, b).$$

In both cases, we can conclude that $l(\A) = 1$, by  Proposition \ref{proposition0}, which completes our proof.
\end{proof}

\begin{Remark}
If $a = \lambda_1 1_{\A} + \lambda_2 a_2 + \cdots + \lambda_n a_n$ is an arbitrary element of an algebra $\A$ satisfying Theorem \ref{4.6} (I)  then $a^2 \equiv 0$. Similarly, if $\A$ satisfies Theorem \ref{4.6} (II) then $a^2 \equiv (\lambda_2 + \cdots + \lambda_n)a.$ So Theorem \ref{4.6} is a kind of classification theorem in the line of Theorem \ref{markova}. The same applies to Theorem \ref{theorem4.2} and Corollary \ref{corollary4.5}.
\end{Remark}

\section*{Conclusion}
We have given a complete characterization of algebras of length 1, without additional conditions neither on the algebra nor on the characteristic of the field.

In case of characteristic 2 we have obtained a kind of classification similar to the one obtained in the associative case (see Theorem \ref{markova}).

In case of ${\rm char}(\F) \neq 2$, our result characterizes algebras of length one through the existence of a particular basis (an {\it special basis}).  Of course there are classification problems that remain opened  and that probably only could be addressed under additional conditions on the algebra.

However our result allows the construction of an easy algorithm to decide if a given algebra, with some known basis and the corresponding multiplication table, has length one.

\section*{Acknowledgements}
Authors acknowledge the referee for many valuable comments that have helped to improve the paper. 

The investigations of the first author are supported by Russian Science Foundation, Project 17-11-01124. The second-named author have been partially supported by Spanish Government, Project MTM2017-83506-C2-2-P and by Principado de Asturias, Project FC -- GRUPIN -- IDI/2018/000193. The third author thanks Oviedo University, where part of this study took place, for the hospitality.

\medskip

\end{document}